\documentclass[12pt]{amsart}
\setlength{\textheight}{23cm}
\setlength{\textwidth}{16cm}
\setlength{\topmargin}{-0.8cm}
\setlength{\parskip}{0.3\baselineskip}
\hoffset=-1.4cm

\title[A Tannakian approach to dimensional reduction]{A Tannakian
  approach to \\ dimensional reduction of principal bundles}

\author[L. \'Alvarez-C\'onsul]{Luis \'Alvarez-C\'onsul}
\address{Instituto de Ciencias Matem\'aticas CSIC-UAM-UC3M-UCM,
Calle Nicol\'as Cabrera, 13-15, Campus Cantoblanco UAM, 28049
Madrid, Spain}
\email{l.alvarez-consul@icmat.es}
\author[I. Biswas]{Indranil Biswas}
\address{School of Mathematics, Tata Institute of
Fundamental Research, Homi Bhabha Road, Mumbai 400005, India}
\email{indranil@math.tifr.res.in}
\author[O. Garc\'{i}a-Prada]{Oscar Garc\'{i}a-Prada}
\address{Instituto de Ciencias Matem\'aticas CSIC-UAM-UC3M-UCM,
Calle Nicol\'as Cabrera, 13-15, Campus Cantoblanco UAM, 28049
Madrid, Spain}
\email{oscar.garcia-prada@icmat.es}

\dedicatory{To Ugo Bruzzo on his 60th birthday}

\subjclass[2000]{53C07, 14D21, 16G20}

\keywords{Principal bundle, dimensional reduction, Tannakian theory,
quiver bundle, Hitchin-Kobayashi correspondence}

\usepackage{amssymb}
\usepackage{hyperref,url}
\usepackage{bm} 
\usepackage{bbm} 
\usepackage{mathrsfs} 

\newtheorem{theorem}{Theorem}[section]
\newtheorem{proposition}[theorem]{Proposition}
\newtheorem{lemma}[theorem]{Lemma}
\newtheorem{corollary}[theorem]{Corollary}

\theoremstyle{definition}
\newtheorem{definition}[theorem]{Definition}
\newtheorem{remark}[theorem]{Remark}
\newtheorem{example}[theorem]{Example}

\numberwithin{equation}{section}

\baselineskip=15.5pt


\newcommand{\secref}[1]{Section~\ref{#1}} 

\newcommand{\defeq}{\mathrel{\mathop:}=} 

\newcommand{\Hom}{\operatorname{Hom}}
\newcommand{\End}{\operatorname{End}}
\newcommand{\Aut}{\operatorname{Aut}}
\newcommand{\ev}{\operatorname{ev}}
\newcommand{\VectC}{{\operatorname{Vect}_\CC}}
\newcommand{\LieH}{\operatorname{Lie}(H)}
\newcommand{\ad}{\operatorname{ad}}

\newcommand{\Vol}{\operatorname{Vol}}
\newcommand{\rk}{\operatorname{rk}}

\newcommand{\GL}{\operatorname{GL}}
\newcommand{\SL}{\operatorname{SL}}
\newcommand{\Id}{\operatorname{Id}}

\newcommand{\CC}{{\mathbb{C}}}
\newcommand{\FF}{{\mathcal{E}}}
\newcommand{\UU}{{\mathbb{U}}}
\newcommand{\VV}{{\mathbb{V}}}

\newcommand{\PP}{{\mathbb{P}}}
\newcommand{\RR}{{\mathbb{R}}}
\newcommand{\cC}{{\mathcal{C}}}
\newcommand{\cF}{{\mathcal{F}}}
\newcommand{\cG}{{\mathcal{G}}}
\newcommand{\cK}{{\mathcal{K}}}
\newcommand{\cO}{{\mathcal{O}}}
\newcommand{\cS}{{\mathcal{S}}}

\newcommand{\RRR}{{\mathscr{R}}}

\newcommand{\glg}{\mathfrak{g}}
\newcommand{\glp}{\mathfrak{p}}
\newcommand{\glu}{\mathfrak{u}}
\newcommand{\gll}{\mathfrak{l}}
\newcommand{\glr}{\mathfrak{r}}
\newcommand{\glt}{\mathfrak{t}}

\renewcommand{\mod}[1]{\operatorname{mod}(#1)}
\newcommand{\Rep}[1]{\operatorname{Rep}(#1)}

\renewcommand{\bf}{{\bm{f}}}
\newcommand{\bh}{{\bm{h}}}
\newcommand{\bE}{{\bm{E}}}

\newcommand{\bU}{{\bm{U}}}
\newcommand{\bV}{{\bm{V}}}

\newcommand{\bvarphi}{{\bm{\varphi}}}
\newcommand{\bphi}{{\bm{\phi}}}
\newcommand{\bpsi}{{\bm{\psi}}}
\newcommand{\bepsilon}{{\bm{\varepsilon}}}
\renewcommand{\epsilon}{\varepsilon}

\newcommand{\dual}[1]{#1^\vee}
\newcommand{\duali}[2]{#1^{\vee#2}}

\begin{document}

\thanks{Partially supported by the the European Commission Marie Curie
  IRSES MODULI Programme IRSES-GA-2013-612534. The first and the third
  authors were partially supported by the Spanish MINECO under the
  ICMAT Severo Ochoa grant No. SEV-2011-0087, 
  and grant No.~MTM2013-43963-P}

\begin{abstract}
Let $P$ be a parabolic subgroup of a connected simply connected complex semisimple Lie group
$G$. Given a compact K\"ahler manifold $X$, the
dimensional reduction of $G$-equivariant holomorphic vector bundles
over $X\times G/P$ was carried out by the first and third
authors~\cite{AG2}. This raises the question of dimensional reduction of
holomorphic principal bundles over $X\times G/P$. The method used for
equivariant vector bundles does not generalize to principal bundles.
In this paper, we adapt to equivariant principal bundles the Tannakian
approach of Nori, to describe the dimensional reduction of
$G$-equivariant principal bundles over $X\times G/P$, and to establish
a Hitchin--Kobayashi type correspondence. In order to be able to apply
the Tannakian theory, we need to assume that $X$ is a complex
projective manifold.
\end{abstract}

\maketitle

\section{Introduction}

Dimensional reduction is a very powerful construction in the context
of gauge theory, both in physics and geometry. Many important
gauge-theoretic equations appear as symmetric solutions of fundamental
equations for connections, like the instanton equations on Riemannian
$4$-manifolds.  Examples of these are the Bogomol'nyi equations for
magnetic monopoles in 3 dimensions and the vortex equation in 2
dimensions, as well as many other important integrable systems and
soliton equations.  In the context of model building, some physicists
have applied for a long time the method of `coset-space dimensional
reduction' in the construction of gauge unified theories (see, e.g.,
\cite{forgacs-manton,bais-et-al,kapetanakis-zoupanos,szabo-valdivia}).

In \cite{Ga0,Ga,BG,AG1} the dimensional reduction techniques were
brought into the context of holomorphic vector bundles over K\"ahler
manifolds, to study the dimensional reduction of stable
$\SL(2,\CC)$-equivariant bundles over $X\times \PP^1$ and the
corresponding Hermitian--Yang--Mills equations, where $X$ is a compact
K\"ahler manifold and $\PP^1$ is the Riemann sphere.  In \cite{AG2}
this construction was generalised to $G$-equivariant vector bundles on
$X\times G/P$, where $G$ is a
connected simply connected complex semisimple Lie group and $P\subset
G$ is a parabolic subgroup. These gave rise to quiver bundles with
relations, that is representations of a quiver with relations in the
category of vector bundles, where the quiver with relations $(Q,\cK)$
is determined by $P$. In particular when $X$ is a point, one has a
description of homogeneous vector bundles over $G/P$ in terms of
representations of the quiver with relations in the category of vector
spaces.

In this paper we undertake the task of generalizing \cite{AG2} to
holomorphic principal $H$-bundles over $X\times G/P$, where $H$ is a
complex reductive group. 
Since a $G$-equivariant holomorphic principal $H$-bundle over $X\times
G/P$ is equivalent to a $P$-equivariant holomorphic principal
$H$-bundle over $X$ (for the trivial $P$-action on $X$), the problem
reduces then to studying the dimensional reduction of such bundles
over $X$. To do this we take the Tannakian point of view of Nori
\cite{No}, regarding a principal $H$-bundle as a functor from the
category of representations of $H$ to the category of vector
bundles. To apply this to our situation, we first show
in~\secref{sec2} that the category of representations of $(Q,\cK)$ in
complex vector spaces has a structure of neutral Tannakian
category. The construction of an identity object, dual objects and
tensor products is extended to the category of $(Q,\cK)$-bundles
in~\secref{sec:equivpfb-Qbun}, where we also show that $P$-equivariant
holomorphic principal $H$-bundles on $X$ are in bijection with
$H$-torsors in the category of $(Q,\cK)$-bundles over $X$, that is
strict exact faithful tensor functors from the category of $H$-modules
to the category $(Q,\cK)$-bundles. For our Tannakian approach we need
to assume that $X$ is projective. In~\secref{sec:dim-red} we define
stability of an $H$-torsor in the category of $(Q,\cK)$-bundles, and
show that the semistability (resp. polystability) of such a torsor is
equivalent to the semistability (resp. polystability) of the
corresponding $G$-equivariant holomorphic $H$-bundle on $X\times G/P$.
We complete our work by showing that the polystability of an
$H$-torsor is equivalent to the existence of solutions of the quiver
vortex equations on the $(Q,\cK)$-bundle corresponding to the adjoint
representation of $H$.

To the knowledge of the authors, this is the first paper where a
Tannakian approach to dimensional reduction is adopted. Another more
direct approach to describe  $P$-equivariant holomorphic bundles on $X$ 
in terms of pairs consisting of a holomorphic principal bundle over $X$ 
with a reduced structure
group and certain `Higgs fields' is indeed possible. We plan to come back
to this and its relation to our Tannakian approach in a future paper.  

\subsection*{Acknowledgments}

We wish to thank ICMAT, Madrid, and TIFR, Mumbai, for hospitality and
support.

\section{Homogeneous vector bundles and quiver modules}\label{sec2}

This section is devoted to a Tannakian description of holomorphic
equivariant vector bundles over a flag manifold $G/P$, in terms of
representations of a quiver with relations. 

Throughout this paper, $G$ is a connected simply connected semisimple
complex affine algebraic group, and $P\subsetneq G$ is a parabolic
subgroup with unipotent radical $R_u(P)\subset P$. The Lie algebras of
$R_u(P)\subset P\subset G$ are denoted $\glu\subset\glp\subset\glg$,
respectively. We also fix a Levi subgroup $L$ of $P$, that is, a
maximal connected reductive subgroup of $P$.

\subsection{Induction and reduction for homogeneous vector bundles}
\label{sub:ind-red-homogeneous-vect}

Let $G/P$ be the quotient of $G$ under the right $P$-action $G\times
P\to G, (g,p)\mapsto gp$ (its points are the orbits $gP$, for $g\in
G$). It is a flag variety, i.e., a complex projective manifold, with a
transitive holomorphic left $G$-action $G\times G/P\to G/P,
(g,g'P)\mapsto gg'P$.
A \emph{homogeneous holomorphic vector bundle} on $G/P$ is a
holomorphic vector bundle $E$ on $G/P$, with an equivariant
holomorphic $G$-action, that is, a holomorphic $G$-action via
holomorphic vector-bundle automorphisms such that $g\cdot v\in
E_{gg'P}$ for $g,g'\in G$, $v\in E_{g'P}$.

In Proposition~\ref{prop:lem1}, we will provide an algebraic
description of homogeneous holomorphic vector bundle in terms of
quivers with relations. Our first step for this description is
provided by a standard process of induction and reduction.

\begin{lemma}\label{lem:ind-red-homgeneous-vect}
There is an equivalence of categories between the homogeneous
holomorphic vector bundles on $G/P$, with $G$-equivariant holomorphic
vector-bundle maps, and the (holomorphic finite-dimensional)
representations of $P$, with $P$-equivariant linear maps.
\end{lemma}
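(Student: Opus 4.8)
The plan is to realize the equivalence through the classical pair of adjoint functors ``restriction to the fiber over the base point'' and ``associated-bundle construction'', the bridge between them being the fact that the quotient map $\pi\colon G\to G/P$ is a holomorphic principal $P$-bundle. First I would fix the base point $x_0=eP\in G/P$ and observe that its stabilizer for the left $G$-action is exactly $P$, since $gP=eP$ if and only if $g\in P$. This is what lets $P$ act on the fiber over $x_0$. Define the restriction functor $\Phi$ by sending a homogeneous holomorphic vector bundle $E$ to $E_{x_0}$: because $P$ fixes $x_0$, the defining property $g\cdot v\in E_{gg'P}$ specializes to $p\cdot v\in E_{pP}=E_{x_0}$ for $v\in E_{x_0}$, so each $p\in P$ acts linearly on $E_{x_0}$, and holomorphy of the $G$-action makes $E_{x_0}$ a holomorphic finite-dimensional $P$-representation. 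A $G$-equivariant bundle map restricts on the fiber over $x_0$ to a $P$-equivariant linear map, so $\Phi$ is indeed a functor.

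For the reverse functor $\Psi$ I would use that $\pi$ is a holomorphic principal $P$-bundle, which for a parabolic $P$ follows from the existence of local holomorphic sections (the big Bruhat cell gives a section over a Zariski-dense open set, and its $G$-translates cover $G/P$). Given a holomorphic finite-dimensional representation $V$ of $P$, form the associated bundle $G\times_P V\defeq (G\times V)/{\sim}$, where $(g,v)\sim(gp^{-1},pv)$ for $p\in P$; local triviality of $\pi$ makes this a holomorphic vector bundle over $G/P$ via $[g,v]\mapsto gP$, and the residual left multiplication $g'\cdot[g,v]=[g'g,v]$ is a well-defined holomorphic homogeneous $G$-action. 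A $P$-equivariant linear map $V\to V'$ induces the $G$-equivariant bundle map $[g,v]\mapsto[g,f(v)]$, so $\Psi$ is a functor.

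It then remains to produce the two natural isomorphisms. For $\Phi\circ\Psi$, the fiber of $G\times_P V$ over $x_0$ is $\{[e,v]:v\in V\}$, identified with $V$ via $v\mapsto[e,v]$, and $p\cdot[e,v]=[e,pv]$ recovers the original $P$-action, giving $\Phi\Psi\cong\mathrm{id}$ naturally in $V$. For $\Psi\circ\Phi$, I would use the evaluation map $G\times_P E_{x_0}\to E$, $[g,v]\mapsto g\cdot v$; it is well defined because the group-action identity $gp^{-1}\cdot(p\cdot v)=g\cdot v$ absorbs the $P$-ambiguity, it is $G$-equivariant by construction, and it is a linear isomorphism on each fiber, hence an isomorphism of homogeneous bundles, natural in $E$.

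The one genuinely technical point — everything else being formal bookkeeping with group actions — is the holomorphic local triviality of $\pi\colon G\to G/P$ and its two consequences: that $\Psi$ outputs holomorphic rather than merely continuous bundles, and that the evaluation map $G\times_P E_{x_0}\to E$ is biholomorphic, not just bijective and fiberwise linear. I expect to settle this by pulling back $E$ along a local holomorphic section $s$ of $\pi$ over an open $U\subset G/P$, which trivializes both $\Psi(E_{x_0})|_U$ and $E|_U$ simultaneously; the holomorphy check then reduces to the transition data, which are controlled by the holomorphic $P$-valued cocycle of $\pi$ together with the holomorphy of the representation $E_{x_0}$.
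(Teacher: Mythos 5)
Your proof is correct and takes essentially the same approach as the paper: restriction to the fibre over the base point $o=P\in G/P$ in one direction, the associated-bundle construction $G\times^P V$ with the $G$-action induced by left multiplication in the other, both resting on the fact that $G\to G/P$ is a holomorphic principal $P$-bundle. The paper states this in two sentences and defers all details to \cite[\S 1.1.1]{AG2}; your write-up simply fills in those details (the mutually inverse natural isomorphisms and the holomorphic local triviality via local sections), which is exactly what the cited reference does.
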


\begin{proof}
One first observes that the projection $G\to G/P$ is a holomorphic
principal $P$-bundle. Then the equivalence maps a homogeneous
holomorphic vector bundle $E$ on $G/P$ into its fibre $E_o$ over the
base point $o=P\in G/P$ with isotropy group $P$, and a representations
$V$ of $P$ into its associated holomorphic vector bundle
$E=G\times^PV$, with $G$-action induced by left multiplication of $G$
on itself. See, e.g.,~\cite[\S 1.1.1]{AG2} for details.
\end{proof}

\subsection{Quivers, relations, and their modules}
\label{sub:rep-quiver-relation}

To fix notation, here we recall a few relevant definitions from the
theory of quiver modules (see, e.g.,~\cite{ASS} for an introduction to
this topic). A \emph{quiver} $Q$ is a pair of sets $Q_0$ and $Q_1$,
together with two maps $t,h\colon Q_1\to Q_1$.  In this paper, we do
not impose the usual restriction that the sets $Q_0$ and $Q_1$ are
finite. The elements of $Q_0$ and $Q_1$ are called the \emph{vertices}
and the \emph{arrows} of the quiver, respectively. An arrow $a\in Q_1$
is represented pictorially as $a\colon v\to w$, where $v=ta$ and
$w=ha$ are called the \emph{tail} and the \emph{head} of the quiver,
respectively. Using the convention that arrows compose as maps, from
right to left, a \emph{non-trivial path} in $Q$ of length $\ell\geq 1$
is a sequence $p=a_\ell\cdots a_1$ of arrows $a_i\in Q_1$ that
compose, that is, $ha_{i-1}=ta_{i}$ for all $1\leq i\leq\ell$. It is
represented as
\begin{equation}\label{eq:path}
p\colon
\stackrel{hp}{\bullet}
\stackrel{a_\ell}{\longleftarrow}
\cdots
\stackrel{a_2}{\longleftarrow}
\bullet
\stackrel{a_1}{\longleftarrow}
\stackrel{tp}{\bullet},
\end{equation}
where $tp=ta_1$ and $hp=ha_\ell$ are called the \emph{tail} and the
\emph{head} of the path $p$. The \emph{trivial path} $e_v$ at a vertex
$v\in Q_0$ consists of the zero-length path at vertex $v$ with no
arrows, so it has tail and head $te_v=he_v=v$. A (complex)
\emph{relation} of $Q$ with tail $v$ and head $w$ is a finite formal
sum
\begin{equation}\label{eq:relation}
r=c_1p_1+\cdots+c_kp_k,
\end{equation}
of paths $p_1,\ldots,p_k$ such that $tp_i=v$ and $hp_i=w$ for all
$1\leq i\leq k$, with coefficients $c_i\in\CC$. A \emph{quiver with
  relations} is a pair $(Q,\cK)$ consisting of a quiver $Q$ and a set
$\cK$ of relations of $Q$. A (complex) linear representation of $Q$,
or a \emph{$Q$-module} for short, is a pair $(\bV,\bvarphi)$
consisting of a set $\bV$ of finite-dimensional vector spaces $V_v$,
indexed by the vertices $v\in Q_0$, and a set $\bvarphi$ of linear
maps $\varphi_a\colon V_{ta}\to V_{ha}$, indexed by the arrows $a\in
Q_1$. Since $Q_0$ may be infinite, we impose the condition that
$V_v=0$ for all but finitely many $v\in Q_0$. A path $p$ of $Q$
determines a linear map, for each $Q$-module $(\bV,\bvarphi)$,
\[
\varphi(p)\colon V_{tp}\longrightarrow V_{hp},
\]
defined as $\varphi(p)=\varphi_{a_\ell}\circ\cdots\circ\varphi_{a_1}$ for a
non-trivial path~\eqref{eq:path}, and as the identity
$\varphi(p)=\Id_{V_v}\colon V_v\to V_v$ for a trivial path $p=e_v$ at a
vertex $v\in Q_0$. Such a $Q$-module $(\bV,\bvarphi)$ \emph{satisfies a
  relation~\eqref{eq:relation}} if the linear map $\varphi(r)\defeq
c_1\varphi(p_1) +\cdots+ c_k\varphi(p_k)$ is zero.

Quiver modules form a category $\mod{Q}$, where a morphism
$\bf\colon(\bV,\bvarphi)\to(\bV',\bvarphi')$ between two $Q$-modules
is a set of linear maps $f_v\colon V_v\to V_v'$, indexed by the
vertices $v\in Q_0$, such that $\varphi_a'\circ
f_{ta}=f_{ha}\circ\varphi_a$ for all $a\in Q_1$. Morphism composition
in this category is defined by vertexwise composition of linear
maps. In this paper, we will be interested in certain full subcategory
\[
\mod{Q,\cK}\subset\mod{Q}
\]
of \emph{$(Q,\cK)$-modules}, that is, $Q$-modules that satisfy a fixed
set of relations $\cK$ of $Q$.

\subsection{The quiver with relations associated to a parabolic subgroup}
\label{sub:quiver-relation}

Here we review a construction of a quiver with relations $(Q,\cK)$
associated to $P$, such that homogeneous holomorphic vector bundles
over $G/P$ are equivalent to $(Q,\cK)$-modules (see
Remark~\ref{rem:equiv-homog-vb-QK-mod}). We follow the construction by
the first and third authors~\cite{AG2}, as it is well suited for
dimensional reduction, but we should emphasize that other
constructions exist (see, e.g.,~\cite{BK,Hille,OR}).
Recall that $G$ is a complex semisimple simply connected affine
algebraic group, $P\subsetneq G$ is a parabolic subgroup, with
unipotent radical $R_u(P)$, $L\subset P$ is a Levi subgroup, and
$\glu\subset\glp\subset\glg$ are the Lie algebras of $R_u(P)\subset
P\subset G$, respectively
(see~\secref{sub:ind-red-homogeneous-vect}). Given two (complex
finite-dimensional) vector spaces $U$ and $V$, $\Hom(U,V)$ is the
vector space of linear maps $U\to V$, and $V^*=\Hom(V,\CC)$ is the
dual vector space of $V$. If $U$ and $V$ are (holomorphic)
$L$-representations, then $\Hom(U,V)$ and $V^*=\Hom(V,\CC)$ become
$L$-representations (for the trivial $L$-action on $\CC$); in this
case, $V^L\subset V$ is the subspace of $L$-invariant vectors, and
$\Hom_L(U,V)=\Hom(U,V)^L\subset\Hom(U,V)$ is the space of
$L$-equivariant linear maps.

Let $\Lambda_P^+$ be the set of all isomorphism classes of irreducible
complex algebraic representations of $L$. Note that if $T\subset G$ is
a Cartan subgroup such that $T\subset L$, with Lie algebra
$\glt\subset\glg$, then the lattice $\Lambda\subset\glt^*$ of integral
weights of $T$ parametrizes isomorphism classes of irreducible complex
algebraic representations of $T$, and $\Lambda_P^+$ can be identified
with a fundamental chamber in $\Lambda$ for the reductive Lie group
$L$. From this view point, $\Lambda_P^+$ is identified with the set of
dominant weights of $L$.

We fix an irreducible representation $M_\lambda$ in the isomorphism
class $\lambda$, for each $\lambda\in Q_0$. For each
$\lambda,\mu,\nu\in\Lambda_P^+$, we define vector spaces
\begin{subequations}\label{eq:A-B-spaces}
\begin{align}&
A_{\mu\lambda}\defeq\Hom_L(\glu\otimes M_\lambda,M_\mu),
\label{eq:A-B-spaces.a}
\\&
B_{\mu\lambda}\defeq\Hom_L(\wedge^2\glu\otimes M_\lambda,M_\mu),
\label{eq:A-B-spaces.b}
\end{align}
\end{subequations}
where $\glu$ is regarded as an $L$-module, and linear maps
\begin{subequations}\label{eq:psi-maps-1}
\begin{gather}
\psi_{\mu\lambda}\colon A_{\mu\lambda}\longrightarrow B_{\mu\lambda},
\\
\psi_{\mu\nu\lambda}\colon A_{\mu\nu}\otimes A_{\nu\lambda}\longrightarrow A_{\mu\lambda}, 
\end{gather}
\end{subequations}
obtained by restriction (see~\cite[Lemma~1.2]{AG2}) of the linear maps
\begin{gather*}
\psi_{\mu\lambda}\colon
\Hom_L(\glu\otimes M_\lambda,M_\mu)
\longrightarrow\Hom_L(\wedge^2\glu\otimes M_\lambda,M_\mu)
\\
\psi_{\mu\nu\lambda}\colon
\Hom(\glu\otimes M_\nu,M_\mu)\otimes\Hom(\glu\otimes M_\lambda,M_\nu)
\longrightarrow \Hom(\wedge^2\glu\otimes M_\lambda,M_\mu),
\end{gather*}
(denoted with the same symbols), given by
\begin{gather*}
\psi_{\mu\lambda}(a)(e,e')\defeq-a([e,e']),\quad
\psi_{\mu\nu\lambda}(a''\otimes a')=a''\wedge a',
\end{gather*}
for all $a\in\Hom(\glu\otimes M_\lambda,M_\mu), a'\in\Hom(\glu\otimes
M_\lambda,M_\nu), a''\in\Hom(\glu\otimes M_\nu,M_\mu)$,
$e,e'\in\glu$. Note that in the definition of $\psi_ {\mu\lambda}$ and
$\psi_ {\mu\nu\lambda}$, we are using the identifications
\begin{gather*}
\Hom(\glu\otimes U,V)=\Hom(\glu,\Hom(U,V))=\glu^*\otimes\Hom(U,V),\\
\Hom(\wedge^2\glu\otimes U,V)=\Hom(\wedge^2\glu,\Hom(U,V))=\wedge^2\glu^*\otimes\Hom(U,V)
\end{gather*}
for vector spaces $U$ and $V$, and in the definition of
$\psi_{\mu\nu\lambda}$, the exterior product is
\[
a''\wedge a'\defeq(s''\wedge s')\otimes(f''\circ f')
\]
for $a'=s'\otimes f', a''=s''\otimes f''$, with $s',s''\in\glu^*$ and
$f'\in\Hom(M_\lambda,M_\nu), f''\in\Hom(M_\nu,M_\mu)$.

Let $\{a_{\mu\lambda}^{(i)}\mid i=1,\ldots,n_{\mu\lambda}\}$ and
$\{b_{\mu\lambda}^{(p)}\mid p=1,\ldots,m_{\mu\lambda}\}$ be bases of
$A_{\mu\lambda}$ and $B_{\mu\lambda}$, respectively, where
$n_{\mu\lambda}=\dim A_{\mu\lambda}$ and $m_{\mu\lambda}=\dim
B_{\mu\lambda}$, for all $\lambda,\mu\in\Lambda_P^+$. For all
$\lambda,\mu,\nu\in\Lambda_P^+$, $1\leq i\leq n_{\nu\lambda}, 1\leq
j\leq n_{\mu\nu}, 1\leq k\leq n_{\mu\lambda}$, we expand the vectors
\[
\psi_{\mu\lambda}(a_{\mu\lambda}^{(i)}),\psi_{\mu\nu\lambda}(a_{\mu\nu}^{(j)}\wedge
a_{\nu\lambda}^{(i)})\in B_{\mu\lambda}
\]
in the given basis of $B_{\mu\lambda}$ as follows, with coefficients
$c^{(k,p)}_{\mu\lambda}, c^{(j,i,p)}_{\mu\nu\lambda}\in\CC$:
\begin{equation}\label{eq:coefficientes-relations}
\psi_{\mu\lambda }(a^{(k)}_{\mu\lambda})
=\sum_{p=1}^{m_{\mu\lambda}} c^{(k,p)}_{\mu\lambda}\, b^{(p)}_{\mu\lambda},
\quad
\psi_{\mu\nu\lambda}(a^{(j)}_{\mu\nu}\otimes a^{(i)}_{\nu\lambda})
=\sum_{p=1}^{m_{\mu\lambda}} c^{(j,i,p)}_{\mu\nu\lambda}\, b^{(p)}_{\mu\lambda}.
\end{equation}

\begin{definition}\label{def:quiver-relations}
The \emph{quiver with relations $(Q,\cK)$ associated to 
$P$} is defined as follows.
\begin{itemize}
\item[(a)]
The quiver $Q$ has vertex set $Q_0=\Lambda_P^+$, i.e., the set of
isomorphism classes of irreducible representations of $L$, and arrow
set
\[
Q_1=\{a_{\mu\lambda}^{(i)}\mid \lambda,\mu\in Q_0, 1\leq i\leq n_{\mu\lambda}\}.
\]
The tail and head maps $t,h\colon Q_1\to Q_0$ are defined by
\[
t(a_{\mu\lambda}^{(i)})=\lambda,\quad h(a_{\mu\lambda}^{(i)})=\mu.
\]
\item[(b)] 
The set of relations is $\cK=\{r_{\mu\lambda}^{(p)}\mid\lambda,\mu\in
Q_0, 1\leq p\leq m_{\mu\lambda}\}$, with
\[
r_{\mu\lambda}^{(p)}\defeq
\sum_{\nu\in Q_0}\sum_{i=1}^{n_{\nu\lambda}}\sum_{j=1}^{n_{\mu\nu}}c^{(j,i,p)}_{\mu\nu\lambda}  a^{(j)}_{\mu\nu} a^{(i)}_{\nu\lambda} +\sum_{k=1}^{n_{\mu\lambda}} c^{(k,p)}_{\mu\lambda} a^{(k)}_{\mu\lambda},
\]
where $a^{(j)}_{\mu\nu} a^{(i)}_{\nu\lambda}$ denotes the path
$\mu\leftarrow\nu\leftarrow\lambda$ defined by the arrows
$a^{(j)}_{\mu\nu}$ and $a^{(i)}_{\nu\lambda}$.
\end{itemize}
\end{definition}

To simplify some notation, given a $Q$-module $(\bV,\bvarphi)$, the
linear map $\varphi_a\colon V_\lambda\to V_\mu$ corresponding to the
arrow $a=a_{\mu\lambda}^{(i)}\in A_{\mu\lambda}$ will be denoted
\begin{equation}\label{eq:phi_a}
\varphi_{\mu\lambda}^{(i)}\defeq\varphi_a\colon V_\lambda\longrightarrow V_\mu.  
\end{equation}

With Definition~\ref{def:quiver-relations}, we have the following
result, where $\Rep{P}$ is the category of (complex
finite-dimensional) representations of $P$, or $P$-modules for short,
in which the morphisms are the $P$-equivariant linear maps.

\begin{theorem}[{\cite[Theorem~1.4]{AG2}}]
\label{thm:equiv-P-mod-QK-mod}
Let $(Q,\cK)$ be the quiver with relations associated to the group
$P$. Then there is an equivalence of categories
\begin{equation}\label{eq:equiv-P-mod-QK-mod}
\Rep{P}\cong\mod{Q,\cK}.
\end{equation}
\end{theorem}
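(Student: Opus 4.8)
The plan is to realize both categories as the category of modules over the nilpotent Lie algebra $\glu$ internal to $\Rep{L}$, and to read off the quiver data from the $L$-isotypic decomposition. First I would exploit the Levi decomposition $P=L\ltimes R_u(P)$. Since $R_u(P)$ is normal in $P$ and $L$ acts on it by conjugation, a finite-dimensional $P$-module is the same datum as an $L$-module $W$ equipped with an $R_u(P)$-action that is $L$-equivariant; and because $R_u(P)$ is unipotent, so that $\exp$ identifies its finite-dimensional representations with those of its nilpotent Lie algebra $\glu$, this is in turn the same as an $L$-equivariant linear map $\rho\colon\glu\otimes W\to W$ which is a Lie-algebra representation, i.e.\ satisfies $\rho([e,e'])=\rho(e)\rho(e')-\rho(e')\rho(e)$. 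I would record this as an equivalence between $\Rep{P}$ and the category of ``$\glu$-modules in $\Rep{L}$''. The reverse functor integrates $\rho$ to an $R_u(P)$-action and combines it with the given $L$-action, the only compatibility needed being exactly the $L$-equivariance of $\rho$; no integrability obstruction arises because the reductive factor $L$ is supplied as an honest group representation rather than merely a Lie-algebra one.

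Next I would unpack the $L$-module structure. Since $L$ is reductive, Schur's lemma yields a canonical isotypic decomposition $W\cong\bigoplus_{\lambda\in\Lambda_P^+}V_\lambda\otimes M_\lambda$, where $V_\lambda=\Hom_L(M_\lambda,W)$ is a trivial-$L$ multiplicity space, nonzero for only finitely many $\lambda$; these are the vertex spaces of a $Q$-module. Decomposing $\rho$ into its block from the $\lambda$-isotype to the $\mu$-isotype and using that $V_\lambda,V_\mu$ carry the trivial $L$-action, each block lies in $\Hom_L(\glu\otimes V_\lambda\otimes M_\lambda,V_\mu\otimes M_\mu)=\Hom(V_\lambda,V_\mu)\otimes A_{\mu\lambda}$. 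Expanding in the fixed basis $\{a_{\mu\lambda}^{(i)}\}$ of $A_{\mu\lambda}$ writes this block as $\sum_i\varphi_{\mu\lambda}^{(i)}\otimes a_{\mu\lambda}^{(i)}$ with $\varphi_{\mu\lambda}^{(i)}\in\Hom(V_\lambda,V_\mu)$, so $\rho$ is precisely the datum of the arrow maps $\varphi_{\mu\lambda}^{(i)}$ of a $Q$-module, and an $L$-equivariant map intertwining two such $\rho$'s is exactly a morphism in $\mod{Q,\cK}$.

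It remains to match the Lie-algebra condition on $\rho$ with the relations $\cK$; this is the step I expect to be the main obstacle, as it is where signs and the identifications $\Hom(\glu\otimes U,V)=\glu^*\otimes\Hom(U,V)$ must be tracked with care. The point is that $\rho$ is a Lie-algebra representation precisely when the antisymmetrization of $\rho\circ\rho$ equals $\rho$ precomposed with the bracket $\wedge^2\glu\to\glu$. Restricted to the block $V_\lambda\otimes M_\lambda\to V_\mu\otimes M_\mu$ and factored through an intermediate isotype $\nu$, the first term is computed by the exterior product $a_{\mu\nu}^{(j)}\wedge a_{\nu\lambda}^{(i)}$, hence by $\psi_{\mu\nu\lambda}$, while the bracket term is computed by $\psi_{\mu\lambda}$, which carries the minus sign built into its definition. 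Both land in $\Hom(V_\lambda,V_\mu)\otimes B_{\mu\lambda}$, and expanding in the basis $\{b_{\mu\lambda}^{(p)}\}$ by means of the structure constants $c^{(j,i,p)}_{\mu\nu\lambda}$ and $c^{(k,p)}_{\mu\lambda}$ of \eqref{eq:coefficientes-relations} turns the vanishing of the $p$-th coefficient into exactly the identity $\varphi(r_{\mu\lambda}^{(p)})=0$. Thus $\rho$ is a $\glu$-representation if and only if the associated $Q$-module satisfies every relation in $\cK$. Finally I would observe that no higher, longer-path relations are needed: the Lie-algebra homomorphism property is a single quadratic-linear identity, so by the universal property of the enveloping algebra $U(\glu)$ it already produces a well-defined action. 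Assembling the chain $\Rep{P}\cong\{\glu\text{-modules in }\Rep{L}\}\cong\mod{Q,\cK}$ then yields the equivalence \eqref{eq:equiv-P-mod-QK-mod}.
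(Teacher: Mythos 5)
Your proposal is correct and follows essentially the same route as the paper's proof sketch (and the proof of \cite[Theorem~1.4]{AG2} that it cites): Levi decomposition $P=L\ltimes R_u(P)$ together with integration/differentiation along the unipotent radical to reduce to $L$-equivariant $\glu$-actions, isotypic decomposition over the reductive group $L$ to extract the vertex spaces $V_\lambda$ and the arrow maps via the chosen bases of $A_{\mu\lambda}$, and identification of the commutation relations $\tau([e,e'])=[\tau(e),\tau(e')]$ with the relations $\cK$ through the structure constants of~\eqref{eq:coefficientes-relations}. The only point worth making explicit is that your dictionary between representations of $R_u(P)$ and of $\glu$ is legitimate here because $R_u(P)$ is simply connected and, thanks to $L$-equivariance (indeed torus-equivariance), $\glu$ automatically acts nilpotently, so no algebraicity issue arises --- a point left implicit in the paper as well.
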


\begin{remark}\label{rem:equiv-homog-vb-QK-mod}
Since $P$-modules are equivalent to homogeneous holomorphic vector
bundles over $G/P$ (see Lemma~\ref{lem:ind-red-homgeneous-vect}),
Theorem~\ref{thm:equiv-P-mod-QK-mod} implies that the latter are also
equivalent to $(Q,\cK)$-modules (\cite[Corollary~1.13]{AG2}).
\end{remark}

For the purposes of this paper, it will be useful to sketch some
ingredients of the proof of Theorem~\ref{thm:equiv-P-mod-QK-mod}. We
begin observing that there is an equivalence
\[
\Rep{L}\cong\mod{Q_0}
\]
between $L$-modules and (finite-dimensional) $Q_0$-graded vector
spaces, i.e. sets $\bV=\{V_\lambda\}_{\lambda\in Q_0}$ of vector
spaces $V_\lambda$, indexed by the vertices $\lambda\in Q_0$, such
that $V_\lambda=0$ for all but finitely many $\lambda\in Q_0$. This
equivalence assigns to a $Q_0$-graded vector space $\bV$, the
$L$-module
\begin{equation}\label{eq:isotopical-dec-V}
  \VV=\bigoplus_{\lambda\in Q_0}V_\lambda\otimes M_\lambda,
\end{equation}
and to an $L$-module $\VV$, the $Q_0$-graded vector space $\bV$
consisting of the multiplicity spaces
$V_\lambda\defeq\Hom_L(M_\lambda,\VV)$, so
that~\eqref{eq:isotopical-dec-V} becomes the isotopical decomposition
of $\VV$. The fact that this is an equivalence follows because $L$ is
reductive. Moreover, given a $Q_0$-graded vector space $\bV$, the
choice of basis $\{a^{(i)}_{\mu\lambda}\}_{i=1}^{n_{\mu\lambda}}$ of
$A_{\mu\lambda}$ induces isomorphisms
\begin{equation}\label{eq:arrow-maps-equiv-maps}
\CC^{n_{\mu\lambda}}\otimes\Hom(V_\lambda,V_\mu)
\stackrel{\cong}{\longrightarrow}
A_{\mu\lambda}\otimes \Hom(V_\lambda,V_\mu)
\stackrel{\cong}{\longrightarrow}
\Hom_L(\glu\otimes V_\lambda\otimes M_\lambda,V_\mu\otimes M_\mu),
\end{equation}
for all $\lambda,\mu\in Q_0$, where the left-hand space parametrizes
sets
$\bvarphi_{\mu\lambda}=\{\varphi_{\mu\lambda}^{(i)}\}_{i=1}^{n_{\mu\lambda}}$
of linear maps $\varphi_{\mu\lambda}^{(i)}\in\Hom(V_\lambda,V_\mu)$,
for $1\leq i\leq n_{\mu\lambda}$, the left-hand isomorphism maps
$\bvarphi_{\mu\lambda}$ into
\[
\varphi_{\mu\lambda}\defeq\sum_{i=1}^{n_{\mu\lambda}}a^{(i)}_{\mu\lambda}\otimes
\varphi_{\mu\lambda}^{(i)}\in A_{\mu\lambda}\otimes \Hom(V_\lambda,V_\mu),
\]
and the right-hand side isomorphism follows directly from the
definition of $A_{\mu\lambda}$. Let $\VV$ be the $L$-module associated
to $\bV$ via~\eqref{eq:isotopical-dec-V}. Then there is an isomorphism
\begin{equation}\label{eq:rep-space-equiv-rep-space}
\RRR_Q(\bV)\stackrel{\cong}{\longrightarrow}\RRR_{L,\glu}(\VV),
\end{equation}
obtained taking the direct sum of the
isomorphisms~\eqref{eq:arrow-maps-equiv-maps} for all $\lambda,\mu\in
Q_0$, where the representation space of $Q$ on a $Q_0$-graded module
$\bV$ is
\begin{equation}\label{eq:rep-space}
\RRR_Q(\bV)\defeq
\bigoplus_{\lambda,\mu\in Q_0}\CC^{n_{\mu\lambda}}\otimes\Hom(V_\lambda,V_\mu)
\cong\bigoplus_{\lambda,\mu\in Q_0}A_{\mu\lambda}\otimes\Hom(V_\lambda,V_\mu),
\end{equation}
and the space of $L$-equivariant maps from $\glu$ into $\End\VV$, for
any $L$-module $\VV$, is
\begin{equation}\label{eq:equiv-rep-space}
\RRR_{L,\glu}(\VV)\defeq\Hom_L(\glu\otimes\VV,\VV)
\cong\Hom_L(\glu,\End\VV).
\end{equation}
Note that the data of a pair $(\bV,\bvarphi)$, where $\bV$ is a
$Q_0$-graded vector space and $\bvarphi\in\RRR_Q(\bV)$, is equivalent to
a $Q$-module. In fact, the
isomorphisms~\eqref{eq:rep-space-equiv-rep-space} determine an
equivalence
\begin{equation}\label{eq:intermediate-equiv}
\Rep{L,\glu}\cong\mod{Q}
\end{equation}
where the left-hand side is the category in which an object is a pair
$(\VV,\tau)$ consisting of an $L$-module $\VV$ and an $L$-equivariant
linear map $\tau\colon\glu\to\End\VV$, and a morphism
$(\VV,\tau)\to(\VV',\tau')$ is an $L$-equivariant linear map
$f\colon\VV\to\VV'$ such that $f\circ\tau(e)=\tau'(e)\circ f$ for all
$e\in\glu$. The right-hand side of~\eqref{eq:intermediate-equiv} is
the category of $Q$-modules (with no relations). To
prove~\eqref{eq:equiv-P-mod-QK-mod}, we observe now that an object of
$\Rep{P}$, i.e. a group morphism $\rho\colon P\to\GL(\VV)$, is
equivalent to an object $(\VV,\tau)$ of $\Rep{L,\glu}$ satisfying
commutation relations, where $\VV$ has $L$-action $\rho|_L\colon
L\to\GL(\VV)$, $\tau\colon\glu\to\End\VV$ is given by
\begin{equation}\label{eq:def-tau}
\tau=d\rho|_{\glu}
\end{equation}
($d\rho\colon\glp\to\End\VV$ being the differential), and the
commutation relations are
\begin{equation}\label{eq:CR2}
\tau([e,e'])=[\tau(e),\tau(e')],
\end{equation}
for all $e,e'\in\glu$. Then~\eqref{eq:equiv-P-mod-QK-mod} follows
because one can show that imposing~\eqref{eq:CR2} on an object
$(\VV,\tau)$ of $\Rep{L,\glu}$ corresponds precisely to imposing the
set of relations $\cK$ on the $Q$-module $(\bV,\bvarphi)$ corresponding
to $(\VV,\tau)$ via~\eqref{eq:intermediate-equiv} (see the proof
of~\cite[Theorem~1.4]{AG2}).

\subsection{A Tannakian structure on modules over
the quiver with relations associated to a parabolic subgroup}
\label{sub:Tannakian-QK-mod}

The category $\mod{Q,\cK}$ of modules over an arbitrary quiver with
relations $(Q,\cK)$ is always abelian, but it may have no further
sensible algebraic structure in general, such as a tensor product,
unless we impose some conditions on $(Q,\cK)$. In this section, we
construct a structure of neutral Tannakian category on $\mod{Q,\cK}$,
when $(Q,\cK)$ is the quiver with relations associated to $P$ (see,
e.g.,~\cite{CR} for research on related algebraic structures, namely
Hopf quivers).

We start with a short review of the notion of a neutral Tannakian
category (see, e.g.,~\cite{De,DM,No,Sa,Ta} for details). A \emph{tensor
  category} is a category $\cC$ with a bifunctor
$\otimes\colon\cC\times\cC\to\cC$, called the tensor product, two
natural isomorphisms $A\otimes(B\otimes C)\to(A\otimes B)\otimes C$
and $A\otimes B\to B\otimes A$, for all objects $A,B,C$ of $\cC$,
called an associativity constraint and a commutativity constraint,
satisfying certain axioms (including the pentagon and the hexagon
axioms), and with an object $\mathbbm{1}$, called the identity object,
and an isomorphism $\mathbbm{1}\to\mathbbm{1}\otimes\mathbbm{1}$, such
that $\mathbbm{1}\otimes-\colon \cC\to\cC$ is an equivalence of
categories. 
A \emph{tensor functor} $(\cF,c)\colon\cC\to\cC'$ between tensor
categories consists of a functor $\cF\colon\cC\to\cC'$ and a natural
isomorphism $c_{A,B}\colon \cF(A)\otimes \cF(B)\to \cF(A\otimes B)$
satisfying canonical axioms involving the tensor products. A
\emph{morphism} $\lambda\colon(\cF,c)\to (\cF',c')$ between two tensor
functors $(\cF,c),(\cF',c')\colon\cC\to\cC'$ is a natural
transformation $\lambda\colon \cF\to \cF'$ satisfying canonical axioms
involving tensor products. The space of these morphisms is denoted
$\Hom^\otimes(\cF,\cF')$. A tensor category is \emph{rigid} if all its
objects admit duals, i.e. there is a contravariant functor
$(-)^\vee\colon\cC^{\operatorname{op}}\to\cC$, and a natural
isomorphism $\ev_A\colon A^\vee\otimes A\to\mathbbm{1}$ for all
objects $A$ of $\cC$, inducing natural isomorphisms
$\Hom(B,A^\vee)\to\Hom(B\otimes A,\mathbbm{1})$ for all $A,B$ (then
the category has inner Hom objects $\underline{\Hom}(A,B)=B\otimes
A^\vee$). It is known that for all tensor functors
$(\cF,c),(\cF',c')\colon\cC\to\cC'$, if $\cC$ and $\cC'$ are rigid,
then every morphism $\lambda\colon(\cF,c)\to (\cF',c')$ is an
isomorphism (see, e.g.,~\cite[Proposition~1.13]{DM}). In this case, the
group of tensor automorphisms of any tensor functor
$(\cF,c)\colon\cC\to\cC'$ is
$\Aut^\otimes(\cF)=\Hom^\otimes(\cF,\cF)$, with group multiplication
given by vertical composition of natural transformations. In
Definition~\ref{def:Tannakian-category}, $\VectC$ is the tensor
category of (finite-dimensional) complex vector spaces.

\begin{definition}[{see, e.g.,~\cite[Definition~2.19]{DM}}]\label{def:Tannakian-category}
A \emph{neutral Tannakian category} (over $\CC$) is pair $(\cC,\cF)$
consisting of a rigid abelian tensor category such that
$\CC=\End(\mathbbm{1})$, and an exact faithful linear tensor functor
$\cF\colon\cC\to\VectC$, called the \emph{fibre functor}.
\end{definition}

\begin{example}\label{ex:Tannakian-Rep}
For any complex affine group scheme $\cG$, the category $\Rep{\cG}$ of
(complex finite-dimensional) representations of $\cG$ is a neutral
Tannakian category, 
with the fibre functor $\cF_\cG\colon\Rep{\cG}\to\VectC$ mapping a
$\cG$-module into its underlying vector space.
\end{example}

By the following fundamental duality theorem of
Tannaka--Grothendieck--Saavedra, roughly speaking, a neutral Tannakian
category is a category with enough added structure that it is
equivalent to the category of representations of an affine group
scheme, with the neutral Tannakian structure of
Example~\ref{ex:Tannakian-Rep}.

\begin{theorem}[see, e.g.,~{\cite[Theorem~2.11]{DM}}]\label{thm:Tannaka-Grothendieck-Saavedra}
Let $(\cC,\cF)$ be a neutral Tannakian category. Then the group
$\cG=\Aut^\otimes(\cF)$ of tensor automorphisms of the fibre functor is
an affine algebraic complex group scheme, and the fibre functor $\cF$
determines an equivalence $(\cC,\cF)\cong(\Rep{\cG},\cF_\cG)$.
\end{theorem}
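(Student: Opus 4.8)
The plan is to reconstruct the group scheme together with its coordinate ring directly from the fibre functor, as in the classical argument of Saavedra and Deligne--Milne. First I would promote $\cG$ to a functor on commutative $\CC$-algebras: for each such $R$ set $\underline{\cG}(R)\defeq\Aut^\otimes(\cF_R)$, where $\cF_R$ is the composite of $\cF$ with the extension of scalars $-\otimes_\CC R$; this is a group under vertical composition of natural transformations, and $\cG=\underline{\cG}(\CC)$. The goal is then to exhibit a commutative Hopf algebra $A$ over $\CC$ representing $\underline{\cG}$, so that $\cG=\operatorname{Spec}A$ is an affine group scheme, and to upgrade $\cF$ to an equivalence onto $\Rep{\cG}$.

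The heart of the construction is to build $A$ by integrating the endomorphisms of the fibre functor. Concretely, I would form the coend
\[
A=\int^{X\in\cC}\dual{\cF(X)}\otimes\cF(X),
\]
equivalently the filtered colimit of the finite-dimensional spaces $\dual{\End(\cF(X))}$, which carries a coalgebra structure with comultiplication dual to composition and counit dual to the identity. The tensor product with its constraints $c_{X,Y}$ then endows $A$ with a commutative multiplication, the identity object $\mathbbm{1}$ supplies the unit, and rigidity---via the duals and evaluations $\ev_X$---furnishes the antipode. One then checks that $\underline{\cG}(R)=\Hom_{\CC\text{-alg}}(A,R)$ naturally in $R$, which gives representability and the affine group scheme $\cG=\operatorname{Spec}A$.

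It remains to identify $(\cC,\cF)$ with $(\Rep{\cG},\cF_\cG)$. By construction each object $X$ acquires an $A$-comodule structure on $\cF(X)$, i.e. a $\cG$-representation, so $\cF$ factors as $\cC\to\Rep{\cG}\to\VectC$. I would then show that the induced functor $\cC\to\Rep{\cG}$ is an equivalence of tensor categories: exactness and full faithfulness descend from the corresponding properties of $\cF$, whereas essential surjectivity---that every finite-dimensional $A$-comodule comes from an object of $\cC$---I would obtain by recognizing an arbitrary comodule as a subquotient of objects already in the image, using that $\cF$ is exact and faithful and that $\CC=\End(\mathbbm{1})$ forces every object of $\cC$ to have finite length. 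Compatibility with $\otimes$, duals and $\mathbbm{1}$ then promotes this to an equivalence of neutral Tannakian categories.

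The main obstacle is precisely this last identification. The delicate points are full faithfulness and essential surjectivity of $\cC\to\Rep{\cG}$: both rest on systematically transporting kernels, cokernels and images across $\cF$ (where exactness and faithfulness are indispensable) and on using rigidity to control duals, and then on verifying that the reconstructed $A$ satisfies all the Hopf coherence axioms compatibly with the pentagon and hexagon constraints. Assembling these is where the real work lies; as this is a standard foundational result we only need its statement, which we take from \cite[Theorem~2.11]{DM}.
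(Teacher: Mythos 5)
The paper gives no proof of this theorem: it is invoked as a standard foundational result, quoted directly from \cite[Theorem~2.11]{DM}, which is exactly the citation your proposal also defers to in the end. Your sketch of the coend/Hopf-algebra reconstruction and the comodule equivalence is a faithful outline of the argument in that reference (modulo two small imprecisions: the filtered colimit realizing the coend is of the duals $\End(\cF|_{\langle X\rangle})^\vee$ of endomorphism algebras of the functor \emph{restricted} to the subcategory generated by $X$, not of the single spaces $\End(\cF(X))$; and finite length of objects follows from faithful exactness of $\cF$ into finite-dimensional vector spaces rather than from $\CC=\End(\mathbbm{1})$ alone), so your treatment is consistent with the paper's.
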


Suppose now $P\subsetneq G$ is a parabolic subgroup, as
in~\secref{sub:ind-red-homogeneous-vect}, and $(Q,\cK)$ is the quiver
with relations associated to $P$, as
in~\secref{sub:quiver-relation}. By
Theorem~\ref{thm:Tannaka-Grothendieck-Saavedra}, we can transfer the
structure of neutral Tannakian category from $\Rep{P}$ to
$\mod{Q,\cK}$ via the equivalence of
Theorem~\ref{thm:equiv-P-mod-QK-mod}, so $\mod{Q,\cK}$ becomes a
neutral Tannakian category such that the group of tensor automorphisms
of the fibre functor is isomorphic to $P$. Our next task is to
construct this structure of neutral Tannakian category on
$\mod{Q,\cK}$. Since $\mod{Q,\cK}$ is an abelian category, it suffices
to define an identity object, suitable dualization and tensor product
operations, and a fibre functor on $\mod{Q,\cK}$.

Since $\Rep{L}$ has identity object $\CC$ with the trivial $L$-action,
we define the identity object $\mathbbm{1}$ of $\mod{Q,\cK}$ as the
$Q$-module $(\bV,\bvarphi)$ given by $V_\lambda=0$ if
$\lambda\neq\lambda_0$, $V_{\lambda_0}=\CC$, $\varphi_a=0$, for all
$\lambda\in Q_0$, $a\in Q_1$, where $\lambda_0\in Q_0$ is the
isomorphism class of the trivial $L$-module
$M_{\lambda_0}\defeq\CC$. The space $\Hom(U,V)$ of linear maps
$f\colon U\to V$ between two $L$-representations $U$ and $V$ is
canonically an $L$-representation (this is the inner Hom in
$\Rep{L}$). In particular, the dual of $V$ is the $L$-representation
$V^*=\Hom(V,\CC)$. To construct the dualization functor on
$\mod{Q,\cK}$, we will use the involution
\[
Q_0\longrightarrow Q_0\colon \lambda\longmapsto\lambda^*,
\]
where $\lambda^*\in Q_0$ is the isomorphism class of the dual
irreducible $L$-module $M_\lambda^*$, for all $\lambda\in Q_0$, and
the 1-dimensional vector spaces
\begin{equation}\label{eq:C_lambda}
C_\lambda\defeq\Hom_L(M_\lambda,M_{\lambda^*}^*),
\end{equation}
for all $\lambda\in Q_0$. Note that $\dim C_\lambda=1$, by Schur's
Lemma, and we could have chosen the irreducible representations
$M_\lambda$, for all $\lambda\in Q_0$, so that either
$M_{\lambda^*}=M_\lambda^*$ or $M_\lambda=M_{\lambda^*}^*$ for all
$\lambda\in Q_0$, in which case $C_\lambda$ would be canonically
isomorphic to $\CC$. However, to make several natural isomorphisms
below become slightly clearer, we will refrain from making this choice
--- this has the effect that there is no \emph{canonical} isomorphism
$C_\lambda\cong\CC$, so the isotopical decomposition of
$M_{\lambda^*}^*$ becomes a canonical $L$-equivariant
isomorphism
\begin{equation}\label{eq:C_lambda-bis}
M_{\lambda^*}^*=C_\lambda\otimes M_\lambda.
\end{equation}
It follows now from~\eqref{eq:A-B-spaces.a}
and~\eqref{eq:C_lambda-bis} that there are canonical
isomorphisms
\begin{equation}\label{eq:A_dual}
A_{\lambda^*\mu^*}\cong C_\mu\otimes A_{\mu\lambda}\otimes C_\lambda^*,
\end{equation}
obtained by composing the transposition isomorphisms
$\Hom(M_{\mu^*},M_{\lambda^*})\cong\Hom(M_{\mu^*}^*,M_{\lambda^*}^*)$
with the isomorphisms (given by evaluation)
$M_{\lambda^*}^*=C_\lambda\otimes M_\lambda$ and
$M_{\mu^*}^*=C_\lambda\otimes M_\mu$. This isomorphism induces the
following linear isomorphisms for all $\lambda,\mu\in Q_0$ and any
(finite-dimensional) vector spaces $V_\lambda$ and $V_\mu$, with
$A_{\mu\lambda}$ given by~\eqref{eq:A-B-spaces.a}:
\begin{equation}\label{eq:isom-A-A-dual}\begin{split}
A_{\mu\lambda}\otimes\Hom(V_{\lambda^*}^*\otimes C_\lambda,V_{\mu^*}^*\otimes C_\mu)
&\cong (C_\mu\otimes A_{\mu\lambda}\otimes C_\lambda^*)
\otimes\Hom(V_{\lambda^*}^*,V_{\mu^*}^*)
\\&\cong
A_{\lambda^*\mu^*}\otimes\Hom(V_{\mu^*},V_{\lambda^*}),
\end{split}\end{equation}
where again we have used the transposition isomorphisms
$\Hom(V_{\lambda^*}^*,V_{\mu^*}^*)\cong\Hom(V_{\mu^*},V_{\lambda^*})$.

Let $(\bV,\bvarphi)$ be a $(Q,\cK)$-module. Then its \emph{dual $(Q,\cK)$-module} 
\begin{equation}\label{eq:dual-1}
\dual{(\bV,\bvarphi)}=(\dual{\bV},\dual{\bvarphi})
\end{equation}
is constructed as follows. For each $\lambda\in Q_0$,
\begin{equation}\label{eq:dual-2}
\dual{V}_\lambda\defeq V_{\lambda^*}^*\otimes C_\lambda.
\end{equation}
For each $\lambda,\mu\in Q_0$, let $\varphi_{\lambda^*\mu^*}\in
A_{\lambda^*\mu^*}\otimes\Hom(V_{\mu^*},V_{\lambda^*})$ be the vector
corresponding to the set
$\bvarphi_{\lambda^*\mu^*}=\{\varphi^{(i)}_{\lambda^*\mu^*}\}_{i=1}^{n_{\lambda^*\mu^*}}$
via the left-hand isomorphism~\eqref{eq:arrow-maps-equiv-maps}, with
$\varphi^{(i)}_{\lambda^*\mu^*}\colon V_{\mu^*}\to V_{\lambda^*}$ as
in~\eqref{eq:phi_a}. Let $\dual{\varphi}_{\mu\lambda}\in
A_{\lambda^*\mu^*}\otimes\Hom(\dual{V}_{\mu^*},\dual{V}_{\lambda^*})$
be the vector corresponding to
\begin{equation}\label{eq:negative-phi}
-\varphi_{\lambda^*\mu^*}\in A_{\lambda^*\mu^*}\otimes\Hom(V_{\mu^*},V_{\lambda^*}),
\end{equation}
via~\eqref{eq:isom-A-A-dual} (note the negative sign).
Then the set $\dual{\bvarphi}_{\mu\lambda}
=\{\duali{\varphi}{(i)}_{\mu\lambda}\}_{i=1}^{n_{\mu\lambda}}$ of
linear maps
\begin{equation}\label{eq:dual-3}
\duali{\varphi}{(i)}_{\mu\lambda}\colon\dual{V}_\lambda\longrightarrow\dual{V}_\mu
\end{equation}
corresponds to $\dual{\varphi}_{\mu\lambda}$ via the left-hand
isomorphism~\eqref{eq:arrow-maps-equiv-maps}, with $\bV$ replaced by
$\dual{\bV}$.

To construct a tensor product on $\mod{Q,\cK}$, we will use the
multiplicity vector spaces
\begin{equation}\label{eq:Littlewood-Richardson-decomposition-1}
C^\lambda_{\mu\nu}\defeq\Hom_L(M_\lambda,M_\mu\otimes M_{\nu}),
\end{equation}
for all $\lambda,\nu,\nu'\in Q_0$. In other words, the $L$-module
$M_\mu\otimes M_\nu$ has isotopical decomposition
\begin{equation}\label{eq:Littlewood-Richardson-decomposition-2}
M_\mu\otimes M_\nu=\bigoplus_{\lambda\in Q_0}C^\lambda_{\mu\nu}\otimes M_\lambda.
\end{equation}
Let $(\bV,\bvarphi)$ and $(\bV',\bvarphi')$ be two $(Q,\cK)$-modules. Then
their tensor product
\begin{equation}\label{eq:tensor-product-1}
(\bU,\bpsi)=(\bV\otimes\bV',\bvarphi\otimes\bvarphi')\defeq(\bV,\bvarphi)\otimes(\bV',\bvarphi')
\end{equation}
is constructed as follows. For each $\lambda\in Q_0$,
\begin{equation}\label{eq:tensor-product-2}
U_\lambda=(V\otimes V')_\lambda
\defeq\bigoplus_{\mu,\nu\in Q_0}V_\mu\otimes V'_{\nu}\otimes C_{\mu\nu}^\lambda.
\end{equation}
For each $\alpha,\beta,\alpha',\beta'\in Q_0$, the sets
$\bvarphi_{\beta\alpha}=\{\varphi_{\beta\alpha}^{(i)}\}_{i=1}^{n_{\beta\alpha}}$
and
$\bvarphi'_{\beta'\alpha'}=\{\varphi_{\beta'\alpha'}^{\prime(i)}\}_{i=1}^{n_{\beta'\alpha'}}$,
correspond via~\eqref{eq:arrow-maps-equiv-maps} to $L$-equivariant
linear maps
\begin{gather*}
\varphi_{\beta\alpha}\colon\glu\otimes V_\alpha\otimes
M_\alpha\longrightarrow V_\beta\otimes M_\beta,
\quad 
\varphi'_{\beta'\alpha'}\colon\glu\otimes V'_{\alpha'}\otimes
M_{\alpha'}\longrightarrow V'_{\beta'}\otimes M_{\beta'},
\end{gather*}
respectively, that induce other ones given by 
\begin{gather*}
\varphi_{\beta\alpha,\alpha'}\defeq\varphi_{\beta\alpha}\otimes\Id_{V'_{\alpha'}\otimes M_{\alpha'}}\colon
\glu\otimes V_\alpha\otimes M_\alpha
\otimes V'_{\alpha'}\otimes M_{\alpha'}
\longrightarrow V_\beta\otimes M_\beta
\otimes V'_{\alpha'}\otimes M_{\alpha'}.
\\
\varphi'_{\beta'\alpha',\alpha}\defeq\Id_{V_\alpha\otimes M_\alpha}\otimes
\varphi'_{\beta'\alpha'}\colon
\glu\otimes 
V_\alpha\otimes M_\alpha\otimes
V'_{\alpha'}\otimes M_{\alpha'}\longrightarrow 
V_\alpha\otimes M_\alpha\otimes V'_{\beta'}\otimes M_{\beta'},
\end{gather*}
respectively obtained tensoring by $V'_{\alpha'}\otimes M_{\alpha'}$
and $V_{\alpha}\otimes M_{\alpha}$,
where~\eqref{eq:Littlewood-Richardson-decomposition-2} implies
\begin{gather*}
\glu\otimes V_\alpha\otimes M_\alpha
\otimes V'_{\alpha'}\otimes M_{\alpha'}
=\bigoplus_{\lambda\in Q_0}\glu\otimes V_\alpha\otimes V'_{\alpha'}\otimes C^\lambda_{\alpha\alpha'}
\otimes M_\lambda,\\
V_\beta\otimes M_\beta
\otimes V'_{\alpha'}\otimes M_{\alpha'}
=\bigoplus_{\mu\in Q_0}V_\beta\otimes V'_{\alpha'}\otimes C^{\mu}_{\beta\alpha'}
\otimes M_{\mu},\\
V_\alpha\otimes M_\alpha\otimes V'_{\beta'}\otimes M_{\beta'}
=\bigoplus_{\mu\in Q_0}V_\alpha\otimes V'_{\beta'}\otimes C^{\mu}_{\alpha\beta'}
\otimes M_{\mu}.
\end{gather*}
Hence $\varphi_{\beta\alpha,\alpha'}$ and $\varphi'_{\beta'\alpha',\alpha}$
admit decompositions
\[
\varphi_{\beta\alpha,\alpha'}=\sum_{\lambda,\mu\in Q_0}\varphi_{\beta\alpha,\alpha'}^{\mu\lambda},
\quad
\varphi'_{\beta'\alpha',\alpha}=\sum_{\lambda,\mu\in Q_0}\varphi_{\beta'\alpha',\alpha}^{\prime\mu\lambda},
\]
where
\begin{gather*}
\varphi_{\beta\alpha,\alpha'}^{\mu\lambda}\colon 
\glu\otimes V_\alpha\otimes V'_{\alpha'}\otimes C^\lambda_{\alpha\alpha'}
\otimes M_\lambda
\longrightarrow
V_\beta\otimes V'_{\alpha'}\otimes C^{\mu}_{\beta\alpha'}
\otimes M_{\mu},
\\
\varphi_{\beta'\alpha',\alpha}^{\prime\mu\lambda}\colon 
\glu\otimes V_\alpha\otimes V'_{\alpha'}\otimes C^\lambda_{\alpha\alpha'}
\otimes M_\lambda
\longrightarrow
V_\alpha\otimes V'_{\beta'}\otimes C^{\mu}_{\alpha\beta'}\otimes M_{\mu}.
\end{gather*}
Since $\varphi_{\beta\alpha,\alpha'}^{\mu\lambda}$ and
$\varphi_{\beta'\alpha',\alpha}^{\prime\mu\lambda}$ are $L$-equivariant
maps, they may be viewed as linear maps
\begin{gather*}
\varphi_{\beta\alpha,\alpha'}^{\mu\lambda}\colon 
V_\alpha\otimes V'_{\alpha'}\otimes C^\lambda_{\alpha\alpha'}
\longrightarrow 
A_{\mu\lambda}\otimes V_\beta\otimes V'_{\alpha'}\otimes C^{\mu}_{\beta\alpha'}.
\\
\varphi_{\beta'\alpha',\alpha}^{\prime\mu\lambda}\colon 
V_\alpha\otimes V'_{\alpha'}\otimes C^\lambda_{\alpha\alpha'}
\longrightarrow
A_{\mu\lambda}\otimes V_\alpha\otimes V'_{\beta'}\otimes C^{\mu}_{\alpha\beta'}, 
\end{gather*}
via~\eqref{eq:arrow-maps-equiv-maps}. Adding them together for all
$\alpha,\alpha',\beta,\beta'\in Q_0$, we obtain another linear map 
\begin{gather}\label{eq:psi-tensor_product}
\begin{split}
\psi_{\mu\lambda}=
\sum_{\alpha,\alpha',\beta\in
  Q_0}\varphi_{\beta\alpha,\alpha'}^{\mu\lambda}
+\sum_{\alpha,\alpha',\beta'\in
  Q_0}
\varphi_{\beta'\alpha',\alpha}^{\prime\mu\lambda}
&\colon 
\\
\bigoplus_{\alpha,\alpha'\in Q_0}
V_\alpha \otimes V'_{\alpha'}\otimes &C^\lambda_{\alpha\alpha'}
\longrightarrow
A_{\mu\lambda}\otimes\bigoplus_{\beta,\beta'\in Q_0}V_\beta\otimes
V'_{\beta'}\otimes C^{\mu}_{\beta\beta'},
\end{split}\end{gather}
where 
\begin{gather*}
U_\lambda=\bigoplus_{\alpha,\alpha'\in Q_0}V_\alpha\otimes V'_{\alpha'}\otimes C^\lambda_{\alpha\alpha'},
\quad 
U_{\mu}=\bigoplus_{\beta,\beta'\in Q_0}V_\beta\otimes V'_{\beta'}\otimes C^{\mu}_{\beta\beta'}
\end{gather*}
and hence $\psi_{\mu\lambda}\colon U_\lambda\to A_{\mu\lambda}\otimes
U_{\mu}$. Then the set $\bpsi_{\mu\lambda}
=\{\psi_{\mu\lambda}^{(i)}\}_{i=1}^{n_{\mu\lambda}}$ of linear maps
\begin{equation}\label{eq:tensor-product-3}
\psi_{\mu\lambda}^{(i)}=(\varphi\otimes\varphi')_{\mu\lambda}^{(i)}
\colon U_\lambda\longrightarrow U_{\mu}
\end{equation}
corresponds to $\psi_{\mu\lambda}$ via the left-hand
isomorphism~\eqref{eq:arrow-maps-equiv-maps}, with $\bV$ replaced by
$\bU$.

The category $\mod{Q,\cK}$ is also equipped with a fibre functor 
\begin{equation}\label{eq:fibre-functor}
\cF_{Q,\cK}\colon\mod{Q,\cK}\longrightarrow\VectC,
\quad (\bV,\bvarphi)\longmapsto\bigoplus_{\lambda\in Q_0}V_\lambda\otimes M_\lambda.
\end{equation}

\begin{lemma}\label{lem1}
The pair $(\mod{Q,\cK},\cF_{Q,\cK})$ is a neutral Tannakian category,
and there is an equivalence of neutral Tannakian categories
\begin{equation}\label{eq:lem1}
(\mod{Q,\cK},\cF_{Q,\cK})\cong(\Rep{P},\cF_P). 
\end{equation}
Therefore the affine algebraic complex group scheme
$\Aut^\otimes(\cF_{Q,\cK})$ is identified with $P$.
\end{lemma}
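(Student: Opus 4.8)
The plan is to leverage the abelian equivalence $\Phi\colon\Rep{P}\to\mod{Q,\cK}$ already supplied by Theorem~\ref{thm:equiv-P-mod-QK-mod}, and to show that it upgrades to an equivalence of neutral Tannakian categories once $\mod{Q,\cK}$ is endowed with the identity object, dualization, tensor product, and fibre functor constructed above. Since $\Rep{P}$ is a neutral Tannakian category (Example~\ref{ex:Tannakian-Rep}), all the structural axioms (the associativity and commutativity constraints, the pentagon and hexagon, rigidity, and $\End(\mathbbm{1})=\CC$) will then be inherited on $\mod{Q,\cK}$ by transport of structure. The first and central observation is that $\Phi^{-1}$ sends $(\bV,\bvarphi)$ to the $P$-module whose underlying $L$-module is the isotypical sum $\VV=\bigoplus_{\lambda\in Q_0}V_\lambda\otimes M_\lambda$, so that the fibre functor \eqref{eq:fibre-functor} satisfies $\cF_{Q,\cK}=\cF_P\circ\Phi^{-1}$ on the nose. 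Thus the fibre functors are automatically compatible, and it remains only to check that the three tensor operations on $\mod{Q,\cK}$ correspond under $\Phi$ to the standard ones on $\Rep{P}$.

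For each of the identity, dual, and tensor product, I would verify the correspondence in two stages: first matching the underlying $L$-modules (equivalently, the $Q_0$-gradings) via the fibre functor, and then matching the $\glu$-actions encoded by the arrows. For the identity object, $\mathbbm{1}=(\bV,0)$ with $V_{\lambda_0}=\CC$ yields $\VV=M_{\lambda_0}=\CC$ with trivial $L$-action and $\tau=0$, i.e. the trivial $P$-module, as required. For the dual, applying $\cF_{Q,\cK}$ to $\dual{(\bV,\bvarphi)}$ and using the canonical isomorphism $C_\lambda\otimes M_\lambda=M_{\lambda^*}^*$ of \eqref{eq:C_lambda-bis} gives $\bigoplus_\lambda V_{\lambda^*}^*\otimes M_{\lambda^*}^*$, which after reindexing by the involution $\lambda\mapsto\lambda^*$ is exactly $\VV^*$ as an $L$-module. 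For the tensor product, applying $\cF_{Q,\cK}$ to $(\bU,\bpsi)$ and using the Littlewood--Richardson decomposition \eqref{eq:Littlewood-Richardson-decomposition-2} collapses $\bigoplus_\lambda U_\lambda\otimes M_\lambda$ to $\bigoplus_{\mu,\nu}V_\mu\otimes V'_\nu\otimes(M_\mu\otimes M_\nu)=\VV\otimes\VV'$, the tensor product of $L$-modules. This settles the $L$-module (grading) part in all three cases.

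The main obstacle is the second stage: checking that the arrow data, which encode the nilpotent part $\tau=d\rho|_{\glu}$ of the $P$-action through the identification \eqref{eq:arrow-maps-equiv-maps}, transform correctly under duality and under the tensor product. Here the two defining features of the construction are precisely what is needed. The negative sign in \eqref{eq:negative-phi} reproduces the contragredient differential $\tau^*(e)=-\tau(e)^{\mathsf{T}}$ governing the dual $P$-module, so that $\Phi^{-1}(\dual{(\bV,\bvarphi)})\cong(\Phi^{-1}(\bV,\bvarphi))^*$ not merely as $L$-modules but as $P$-modules. Likewise, the map \eqref{eq:psi-tensor_product} is the sum of the two terms $\varphi_{\beta\alpha,\alpha'}=\varphi_{\beta\alpha}\otimes\Id$ and $\varphi'_{\beta'\alpha',\alpha}=\Id\otimes\varphi'_{\beta'\alpha'}$, which is exactly the Leibniz rule $\tau_{\VV\otimes\VV'}(e)=\tau_\VV(e)\otimes\Id+\Id\otimes\tau_{\VV'}(e)$ for the $\glu$-action on a tensor product of $P$-modules. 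Granting these two compatibilities (each a direct unwinding of \eqref{eq:arrow-maps-equiv-maps} together with the definitions \eqref{eq:def-tau}--\eqref{eq:CR2}), $\Phi$ becomes a tensor functor compatible with duals and fibre functors, hence an equivalence of neutral Tannakian categories \eqref{eq:lem1}. Finally, Theorem~\ref{thm:Tannaka-Grothendieck-Saavedra} applied to both sides identifies $\Aut^\otimes(\cF_{Q,\cK})\cong\Aut^\otimes(\cF_P)\cong P$, completing the proof.
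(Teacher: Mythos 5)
Your proposal is correct and follows essentially the same route as the paper's own proof: both transport the Tannakian structure through the equivalence of Theorem~\ref{thm:equiv-P-mod-QK-mod}, match the underlying $L$-modules via the isotypical decompositions \eqref{eq:C_lambda-bis} and \eqref{eq:Littlewood-Richardson-decomposition-2}, and then match the $\glu$-actions through the intermediate category $\Rep{L,\glu}$, identifying the sign in \eqref{eq:negative-phi} with the contragredient rule \eqref{eq:hat-tau} and the two-term sum \eqref{eq:psi-tensor_product} with the Leibniz rule \eqref{eq:tensor-tau}. The only cosmetic difference is that you also verify the identity object explicitly and state the fibre-functor compatibility up front, while the paper additionally records that the dual and tensor-product $Q$-modules satisfy the relations $\cK$ because they correspond to genuine $P$-modules --- a point worth keeping explicit.
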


\begin{proof}
In view of Theorem~\ref{thm:equiv-P-mod-QK-mod}, we need to check that
the equivalence~\eqref{eq:equiv-P-mod-QK-mod} identifies the
dualization operation, the tensor product operation, and the fibre
functor on the categories $\mod{Q,\cK}$ and $\Rep{P}$. 
To check that the dualization operations coincide, we fix a
$(Q,\cK)$-module $(\bV,\bvarphi)$, with dual $\dual{(\bV,\bvarphi)}
=(\dual{\bV},\dual{\bvarphi})$. Let $\VV$ and $\dual{\VV}$ be the
$L$-modules associated to $\bV$ and $\dual{\bV}$ (as
in~\eqref{eq:isotopical-dec-V}), and $\rho\colon P\to\GL(\VV)$ the
$P$-module corresponding to
$(\bV,\bvarphi)$. Then~\eqref{eq:C_lambda-bis} and~\eqref{eq:dual-2} give
canonical $L$-equivariant isomorphisms
\[
\dual{V}_\lambda\otimes M_\lambda\cong V^*_{\lambda^*}\otimes M_{\lambda^*}^*,
\]
for all $\lambda\in Q_0$, so the $L$-modules $\VV$ and $\dual{\VV}$
are related by another canonical isomorphism
\[
\dual{\VV}=\bigoplus_{\lambda\in Q_0}\dual{V}_\lambda\otimes
M_\lambda \cong \bigoplus_{\lambda\in Q_0}V^*_{\lambda^*}\otimes
M_{\lambda^*}^* =\VV^*.
\]
Now, the $P$-module $\rho\colon P\to\GL(\VV)$ determines an object
$(\VV,\tau)$ of $\Rep{L,\glu}$ (see~\eqref{eq:intermediate-equiv}),
where the Lie-algebra representation $\tau\colon\glu\to\End\VV$ is
given by $\tau=d\rho|_{\glu}$ (see~\eqref{eq:def-tau}), whereas its
dual $P$-module $\dual{\rho}\colon P\to\GL(\VV^*)$ determines the dual
object $(\VV^*,\dual{\tau})$ of $\Rep{L,\glu}$, where the dual
Lie-algebra representation $\dual{\tau}\colon\glu\to\End\VV^*$ is
given by
\begin{equation}\label{eq:hat-tau}
\dual{\tau}(e)=-\tau(e)^*
\end{equation}
for all $e\in\glu$ ($\tau(e)^*\colon\VV^*\to\VV^*$ being the dual map
of $\tau(e)\colon\VV\to\VV$). Comparing~\eqref{eq:hat-tau}
with~\eqref{eq:negative-phi}, it now follows from the construction of
the maps~\eqref{eq:dual-3} that the object $(\VV^*,\dual{\tau})$ of
$\Rep{L,\glu}$ corresponds to the $Q$-module
$\dual{(\bV,\bvarphi)}$. Note that the $Q$-module $\dual{(\bV,\bvarphi)}$
satisfies the set of relations $\cK$, because it corresponds to the
$P$-module $\dual{\rho}\colon P\to\GL(\VV^*)$.

To check that the tensor product operations coincide, we fix
$(Q,\cK)$-modules $(\bV,\bvarphi)$ and $(\bV',\bvarphi')$. Let
$(\bU,\bpsi)=(\bV,\bvarphi)\otimes(\bV',\bvarphi')$ be their tensor product,
so $\bU=\bV\otimes\bV'$ is given
by~\eqref{eq:tensor-product-2}. Then~\eqref{eq:Littlewood-Richardson-decomposition-2}
implies that the $L$-modules $\VV,\VV'$ and $\UU$ associated (as
in~\eqref{eq:isotopical-dec-V}) to the $Q_0$-graded vector spaces
$\bV,\bV'$ and $\bU=\bV\otimes\bV'$, respectively, satisfy
\[
\UU=\VV\otimes\VV'.
\]
Let $\rho\colon P\to\GL(\VV)$ and $\rho'\colon P\to\GL(\VV')$ be the
$P$-modules corresponding to the $(Q,\cK)$-modules $(\bV,\bvarphi)$
and $(\bV',\bvarphi')$, respectively. They determine objects
$(\VV,\tau)$ and $(\VV',\tau')$ of $\Rep{L,\glu}$
(see~\eqref{eq:intermediate-equiv}), with $\tau=d\rho|_{\glu},
\tau=d\rho'|_{\glu}$ (see~\eqref{eq:def-tau}), and their tensor
product $\rho''\defeq\rho\otimes\rho'\colon P\to\GL(\UU)$ determines
another one $(\UU,\tau'')$, with $\tau''=d\rho''|_{\glu}$ given by
\begin{equation}\label{eq:tensor-tau}
\tau''(e)=\tau(e)\otimes\Id_{\VV'}+\Id_{\VV}\otimes\tau(e), 
\end{equation}
for all $e\in\glu$. Comparing~\eqref{eq:tensor-tau}
with~\eqref{eq:psi-tensor_product}, it now follows from the
construction of the maps~\eqref{eq:tensor-product-3} that the object
$(\UU,\tau'')$ of $\Rep{L,\glu}$ corresponds to the $Q$-module
$(\bU,\bpsi)$. The fact that the $Q$-module $(\bU,\bpsi)$ satisfies the
set of relations $\cK$ follows because it corresponds to the tensor
product $P$-module $\rho''\colon P\to\GL(\UU)$.

Finally, the identification of the fibre functors follows by
comparing~\eqref{eq:isotopical-dec-V} and \eqref{eq:fibre-functor}.
\end{proof}

Note that the \emph{inner} Hom spaces of the rigid tensor categories
$\Rep{P}$ and $\mod{Q,\cK}$, corresponding to each other via the
equivalence~\eqref{eq:equiv-P-mod-QK-mod}, are respectively given by
\[
\Hom(\VV,\VV')\cong\VV'\otimes\VV^*, \quad 
\Hom((\bV,\bvarphi),(\bV',\bvarphi'))=(\bV',\bvarphi')\otimes\dual{(\bV,\bvarphi)}.
\]

We are now ready to prove the main result of this section.

\begin{proposition}\label{prop:lem1}
There is an equivalence of neutral Tannakian categories between 
the category of holomorphic homogeneous vector bundles on $G/P$ and
the category of $(Q,\cK)$-modules with the fibre functor $\cF_{Q,\cK}$.
\end{proposition}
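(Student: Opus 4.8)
The plan is to assemble the proposition from two equivalences already established in the excerpt. By Lemma~\ref{lem:ind-red-homgeneous-vect}, the category of homogeneous holomorphic vector bundles on $G/P$ is equivalent to $\Rep{P}$ via induction and reduction. By Lemma~\ref{lem1}, the category $(\mod{Q,\cK},\cF_{Q,\cK})$ is equivalent as a neutral Tannakian category to $(\Rep{P},\cF_P)$. So at the level of underlying categories the composite equivalence is immediate. The content of the proposition is therefore the \emph{Tannakian} upgrade: we must endow the category of homogeneous vector bundles with the structure of a neutral Tannakian category, and check that the equivalence of Lemma~\ref{lem:ind-red-homgeneous-vect} respects this structure.

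First I would make explicit the tensor structure on homogeneous bundles. Given two homogeneous holomorphic vector bundles $E$ and $E'$ on $G/P$, their tensor product $E\otimes E'$ and dual $E^\vee$ are again holomorphic vector bundles carrying the naturally induced $G$-action, so they are again homogeneous; the trivial line bundle $\cO_{G/P}$ (with its canonical $G$-linearization) serves as identity object, the associativity and commutativity constraints are the usual ones on vector bundles, and the inner Hom is $\underline{\Hom}(E,E')=E'\otimes E^\vee$. The fibre functor is $E\mapsto E_o$, the fibre over the base point $o=P\in G/P$, which is exact, faithful, and $\CC$-linear. Then I would verify that under the reduction functor $E\mapsto E_o$ of Lemma~\ref{lem:ind-red-homgeneous-vect}, these operations match those on $\Rep{P}$: the key point is that $(E\otimes E')_o=E_o\otimes E'_o$ and $(E^\vee)_o=(E_o)^*$ as $P$-representations, and the identity object maps to the trivial $P$-module $\CC$. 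These are natural isomorphisms compatible with the constraints, so the reduction functor is a tensor equivalence identifying the fibre functor $E\mapsto E_o$ with $\cF_P\colon\Rep{P}\to\VectC$.

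Having done this, the proposition follows by transport of structure: the equivalence
\[
\{\text{homogeneous holomorphic vector bundles on }G/P\}\cong(\Rep{P},\cF_P)
\]
of Lemma~\ref{lem:ind-red-homgeneous-vect}, now promoted to an equivalence of neutral Tannakian categories, is composed with the equivalence
\[
(\Rep{P},\cF_P)\cong(\mod{Q,\cK},\cF_{Q,\cK})
\]
of Lemma~\ref{lem1}. Since a composite of equivalences of neutral Tannakian categories is again such an equivalence, and both factors identify the respective fibre functors, the resulting functor is an equivalence of neutral Tannakian categories, carrying $E\mapsto E_o$ to $\cF_{Q,\cK}$ as required.

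I expect the main obstacle to be the careful verification that the reduction functor $E\mapsto E_o$ is a genuine \emph{tensor} functor, rather than merely an equivalence of abelian categories. One must confirm that the canonical fibrewise isomorphisms $(E\otimes E')_o\cong E_o\otimes E'_o$ are compatible with the associativity and commutativity constraints (the pentagon and hexagon coherences) and that dualization is respected including the evaluation morphisms $E_o^\vee\otimes E_o\to\CC$. This is essentially a coherence bookkeeping task and poses no conceptual difficulty, since all the structure on homogeneous bundles is inherited fibrewise from $\VectC$; but it is the step where care is needed, as it is precisely the passage from an abstract categorical equivalence to an equivalence of Tannakian categories that the statement asserts.
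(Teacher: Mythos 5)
Your proposal takes essentially the same route as the paper's own proof: compose the equivalence of Lemma~\ref{lem:ind-red-homgeneous-vect} with the Tannakian equivalence of Lemma~\ref{lem1}, the key observation being that the induction/reduction equivalence is itself an equivalence of neutral Tannakian categories, with identity object the trivial homogeneous bundle $G/P\times\CC$ and fibre functor $E\mapsto E_o$. The paper states this more tersely, leaving implicit the fibrewise coherence checks you spell out, but the decomposition and the content are the same.
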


\begin{proof}
This follows from Lemmas~\ref{lem:ind-red-homgeneous-vect}
and~\ref{lem1}, since the equivalence in
Lemma~\ref{lem:ind-red-homgeneous-vect} is an equivalence of neutral
Tannakian categories. In the category of holomorphic homogeneous
vector bundles $E$ over $G/P$, the identity object is the trivial
homogeneous vector bundle $\CC_{G/P}\defeq G\times^P\CC=G/P\times\CC$,
and the fibre functor $\cF_{G/P}$ is given by
\[
\cF_{G/P}(E)=E_o, 
\]
where $o=P\in G/P$ is the base point of $G/P$, with isotropy group
$P$.
\end{proof}

\section{Equivariant principal bundles and torsors in quiver bundles}
\label{sec:equivpfb-Qbun}

Building on the equivalences of Lemma~\ref{lem1} and
Proposition~\ref{prop:lem1}, in this section we adapt Nori's functor
approach to principal bundles, to obtain a Tannakian description of
equivariant principal bundles over $X\times G/P$. This view point will
be useful, in~\secref{sec:dim-red}, in the understanding via
dimensional reduction to $X$ of the algebro-geometric
(semi/poly)stability condition for equivariant holomorphic principal
bundles on $X\times G/P$.

\subsection{Notation}
\label{sub:preliminaries}

Throughout this paper, $X$ is a smooth irreducible complex projective
variety, $H$ is a complex connected reductive affine algebraic group,
and $\Rep{H}$ is the category of $H$-modules, that is, (holomorphic
finite-dimensional complex) representations of $H$ and their
$H$-equivariant linear maps. As in~\secref{sec2}, $G$ is a connected
simply connected semisimple complex affine algebraic group, and
$P\subsetneq G$ is a parabolic subgroup, with unipotent radical
$R_u(P)\subset P$. The Lie algebras of $R_u(P)\subset P\subset G$ are
denoted
\[
\glu\subset\glp\subset\glg,
\]
respectively. We also fix a Levi subgroup
\[
L=L(P)\subset P
\]
of $P$, that is, a maximal connected reductive subgroup of $P$. Recall
that any two Levi subgroups are conjugate by some element of
$R_u(P)\subset P$ \cite[p. 185, Theorem]{Hu}. 

Given a complex manifold $M$ with a holomorphic left $G$-action, a
\emph{$G$-equivariant holomorphic principal $H$-bundle} over $M$ is a
holomorphic principal $H$-bundle $\pi:E'_H\to M$ on $M$, together with
a holomorphic left $G$-action $\rho:G\times E'_H\to E'_H$ on the total
space of $E'_H$ that commutes with $\pi$, and such that for all
$(g,p)\in G\times M$, the map $\rho_{g,p}: E'_p\to E'_{g\cdot p}$
induced by this action is an isomorphism of $H$-manifolds, where
$E'_p=\pi^{-1}(p)$.
In this paper, we are concerned with $G$-equivariant holomorphic
principal $H$-bundles on the $G$-manifold $M=X\times G/P$. The group
$G$ acts on $X\times G/P$ by the trivial action on $X$ and the
left-translation action induced by left multiplication on the flag
variety $G/P$. If $X$ is a point, we recover the notion of
\emph{homogeneous holomorphic principal $H$-bundle} over
$G/P$~\cite{Bi}, that is, a holomorphic principal $H$-bundle $E'_H\to
G/P$, together with a holomorphic left $G$-action on $E'_H$ that lifts
the left-translation action of $G$ on $G/P$. If, furthermore,
$H=\GL(r,\CC)$ for some integer $r>0$, then $E'_H$ is equivalent to a
rank-$r$ homogeneous holomorphic vector bundle over $G/P$~\cite{AG2},
that is, a $G$-equivariant holomorphic vector bundle $E'$ over $G/P$
(see~\secref{sub:ind-red-homogeneous-vect}).

\subsection{Induction and reduction of equivariant principal bundles}
\label{sub:Induction-Reduction}

We start constructing an equivalence betweeen the groupoid of
$G$-equivariant holomorphic principal $H$-bundles on $X\times G/P$,
and the groupoid of $P$-equivariant holomorphic principal $H$-bundles
on the $P$-variety $X$, with $P$ acting trivially on $X$ (as usual, a
groupoid is a category in which every arrow is invertible). The
morphisms of the former groupoid are $G$-equivariant isomorphisms of
holomorphic principal $H$-bundles. The morphisms of the latter
groupoid are $P$-equivariant isomorphisms of holomorphic principal
$H$-bundles.

\begin{lemma}\label{lem:induction-reduction}
There is an equivalence between the groupoid of $G$-equivariant
holomorphic principal $H$-bundles on $X\times G/P$, and the groupoid
of $P$-equivariant holomorphic principal $H$-bundles on $X$ .
\end{lemma}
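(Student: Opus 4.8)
The plan is to establish the equivalence via the classical induction--reduction dictionary for equivariant bundles, exactly parallel to Lemma~\ref{lem:ind-red-homgeneous-vect} but for principal bundles rather than vector bundles. First I would recall that the projection $q\colon X\times G\to X\times G/P$ obtained from $\Id_X\times(G\to G/P)$ is a holomorphic principal $P$-bundle, and that $G$ acts on $X\times G$ by $g\cdot(x,g')=(x,gg')$, commuting with the right $P$-action and covering the $G$-action on $X\times G/P$. The two functors realizing the equivalence are then the following. In one direction, given a $G$-equivariant holomorphic principal $H$-bundle $\pi\colon E'_H\to X\times G/P$, I would \emph{reduce} to the slice over the base point: restrict $E'_H$ to the closed submanifold $X\times\{o\}\cong X$, where $o=P\in G/P$ has isotropy group $P$. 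Since $P$ fixes $o$, the subgroup $P\subset G$ acts on $E'_H|_{X\times\{o\}}$ through the $G$-action, covering the trivial $P$-action on $X$, and this action commutes with the principal $H$-action; hence $E'_H|_{X\times\{o\}}$ is a $P$-equivariant holomorphic principal $H$-bundle on $X$. In the other direction, given a $P$-equivariant holomorphic principal $H$-bundle $F_H\to X$, I would \emph{induce} by forming
\[
E'_H\defeq(X\times G)\times^P F_H,
\]
the quotient of $(X\times G)\times_X F_H$ by the diagonal right $P$-action, equipped with the residual $H$-action and with the left $G$-action induced by left multiplication on the $G$-factor; one checks this is a $G$-equivariant holomorphic principal $H$-bundle on $X\times G/P$.

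Next I would verify that these two constructions are quasi-inverse to one another and that they respect morphisms. Starting from $F_H$, restricting the induced bundle $(X\times G)\times^P F_H$ to $X\times\{o\}$ returns $F_H$ canonically, because the fibre of $X\times G\to X\times G/P$ over $o$ is the $P$-orbit of $(x,e)$, and the associated-bundle quotient trivializes the $G$-factor along this fibre, leaving $F_H$ together with its $P$-action. Conversely, starting from a $G$-equivariant $E'_H$, the $G$-action furnishes a canonical $G$-equivariant isomorphism $(X\times G)\times^P(E'_H|_{X\times\{o\}})\xrightarrow{\cong}E'_H$, sending the class of $((x,g),v)$ to $g\cdot v$; this is well defined because the $P$-equivariance of the restriction matches the right $P$-action used in the associated-bundle construction, it is $H$-equivariant since the $G$- and $H$-actions commute, and it is $G$-equivariant by construction. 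On morphisms, a $G$-equivariant isomorphism of $H$-bundles restricts to a $P$-equivariant isomorphism over $X\times\{o\}$, and conversely a $P$-equivariant isomorphism induces a $G$-equivariant one after inducing; these assignments are mutually inverse and compatible with composition, so both functors are fully faithful, hence give an equivalence of groupoids.

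The main obstacle I anticipate is purely a matter of care rather than depth: one must confirm that the induction construction $(X\times G)\times^P F_H$ genuinely yields a \emph{holomorphic} principal $H$-bundle (not merely a fibre bundle) and that the $G$-action on it is holomorphic and equivariant in the required sense. This amounts to checking that the right $P$-action on $(X\times G)\times_X F_H$ is free and proper with holomorphic quotient, which follows because the $P$-action on the $G$-factor already is, and that the $H$-action descends to a free transitive action on fibres commuting with $G$. Because $G\to G/P$ is a locally trivial holomorphic $P$-bundle, all of this is local on $X\times G/P$ and reduces to the elementary case of a trivializing chart, so the verification is routine; I would simply record it and refer, for the analogous vector-bundle statement, to~\cite[\S 1.1.1]{AG2}.
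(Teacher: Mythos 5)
Your proposal is correct and takes essentially the same approach as the paper: reduction by restricting to the slice $X\cong X\times\{o\}\subset X\times G/P$, and induction by forming the quotient of the pullback $(X\times G)\times_X F_H=p_1^*F_H$ by the diagonal right $P$-action, which is exactly the paper's $(p_1^*E_H)/P$ construction. The only difference is that you additionally spell out the quasi-inverse isomorphisms and the behaviour on morphisms, which the paper leaves implicit.
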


\begin{proof}
For any $G$-equivariant holomorphic principal $H$-bundle $E_H'$ on
$X\times G/P$, the restriction of $E_H'$ to the slice $X\cong X\times
P/P\subset X\times G/P$ is a $P$-equivariant holomorphic principal
$H$-bundle on $X$. Conversely, given a $P$-equivariant holomorphic
principal $H$-bundle $E_H$ on $X$, consider the pullback $p^*_1 E_H\to
X\times G$, where $p_1\colon X\times G\to X$ is the canonical
projection. The left-translation action of $G$ on itself and the
trivial action of $G$ on $X$ together define an action of $G$ on
$X\times G$. Since $p^*_1 E_H$ is a pullback from $X$, this action of
$G$ on $X\times G$ has a canonical lift to a left action of $G$ on
$p^*_1 E_H$.

The right-translation action of $P$ on $G$ and the trivial action of
$P$ on $X$ together produce an action of $P$ on $X\times G$. Using the
action of $P$ on $E_H$, we get a lift of this action of $P$ on
$X\times G$ to an action of $P$ on $p^*_1 E_H$. The quotient $(p^*_1
E_H)/P$ for this action of $P$ on $p^*_1 E_H$ is a principal
$H$-bundle on $X\times G/P$.  The left action of $G$ on $p^*_1 E_H$
constructed earlier descends to an action of $G$ on the principal
$H$-bundle $(p^*_1 E_H)/P\to X\times G/P$ (this is because the
left-translation action of $G$ on itself commutes with the
right-translation action of $P$ on $G$).
\end{proof}

In the situation of the above lemma, we say a $G$-equivariant
holomorphic principal $H$-bundle $E_H'$ on $X\times G/P$ is
\emph{induced} by the corresponding $P$-equivariant holomorphic
principal $H$-bundle $E_H$ on $X$, denoted
\begin{equation}\label{eq:induced-pfb}
E_H'=G\times^PE_H\longrightarrow X\times G/P.
\end{equation}

According to Lemma~\ref{lem:induction-reduction}, our next task is to
give an appropriate description of $P$-equivariant holomorphic
principal $H$-bundles over $X$. Since we will apply Tannakian methods
in this paper, this will be achieved using to the category of
$P$-equivariant holomorphic vector bundles over $X$. The latter
category has been described in a combinatorial way using
representations of a quiver with relations~\cite{AG2}, as reviewed
in~\secref{sub:rep-quiver-relation}.

\subsection{Equivariant vector bundles  and quiver bundles}
\label{sub:Q-bundles}

\newcommand{\Vect}[1]{\operatorname{Vect}_X(#1)}
\newcommand{\VectX}{\operatorname{Vect}_X}
\newcommand{\VectXP}{\operatorname{Vect}_{X\times P}}
\newcommand{\VectG}{\operatorname{Vect}_{X\times G/P}^G}
\newcommand{\VectP}{\operatorname{Vect}_X^P}
\newcommand{\HBunG}{H\textnormal{-}\operatorname{Bun}_{X\times G/P}^G}
\newcommand{\HBunP}{H\textnormal{-}\operatorname{Bun}_X^P}

Let $Q$ be a quiver. Replacing vector spaces by holomorphic vector
bundles in the definition of $Q$-module given 
in~\secref{sub:rep-quiver-relation}, we obtain the category $\Vect{Q}$
of holomorphic quiver bundles. More precisely,
\begin{itemize}
\item 
a \emph{holomorphic
  $Q$-bundle} on $X$ is a pair $(\bE,\bphi)$ consisting of a set $\bE$
of holomorphic vector bundles $E_v$ on $X$, indexed by the vertices
$v\in Q_0$, and a set $\bphi$ of holomorphic vector-bundle
homomorphisms $\phi_a\colon E_{ta}\to E_{ha}$, indexed by the arrows $a\in Q_1$.
\item
a \emph{homomorphism} $\bf\colon(\bE,\bphi)\to(\bE',\bphi')$ of holomorphic $Q$-bundles is a
set of homomorphisms $f_v\colon E_v\to E_v'$ of holomorphic vector
bundles, indexed by the vertices $v\in Q_0$, such that $\phi_a'\circ
f_{ta}=f_{ha}\circ\phi_a$ for all $a\in Q_1$. 
\end{itemize}
Let $\cK$ be a set of relations of $Q$. In this section, we will be
interested in the full subcategory
\[
\Vect{Q,\cK}\subset\Vect{Q}
\]
of \emph{holomorphic $(Q,\cK)$-bundles} on $X$, that is, holomorphic
$Q$-bundles that satisfy the set of relations $\cK$, where
$(\bE,\bphi)$ \emph{satisfies a relation~\eqref{eq:relation}} if
$\phi(r)\defeq c_1\phi(p_1) +\cdots+ c_k\phi(p_k)$ is zero, with a
path $p$ of $Q$ determining a holomorphic vector-bundle homomorphism
$\phi(p)\colon E_{tp}\to E_{hp}$, given by
$\phi(p)=\phi_{a_\ell}\circ\cdots\circ\phi_{a_1}$ for a non-trivial
path~\eqref{eq:path}, and by the identity $\phi(p)=\Id_{E_v}\colon
E_v\to E_v$ for the trivial path $p=e_v$ at any $v\in Q_0$.

Now, let $P\subsetneq G$ be a parabolic subgroup, and $(Q,\cK)$ the
quiver with relations associated to $P$ (see
Definition~\ref{def:quiver-relations}). Then the category
$\Vect{Q,\cK}$ of holomorphic $(Q,\cK)$-bundles has an identity
object, a dualization functor and a tensor product bifunctor, denoted
\begin{gather*}
\dual{(-)}\colon\Vect{Q,\cK}^{\operatorname{op}}\longrightarrow\Vect{Q,\cK},\\
\otimes\colon\Vect{Q,\cK}\times\Vect{Q,\cK}\longrightarrow\Vect{Q,\cK},
\end{gather*}
respectively, constructed exactly as in~\secref{sub:Tannakian-QK-mod},
replacing vector spaces by holomorphic vector bundles
(see~\eqref{eq:dual-1} and~\eqref{eq:tensor-product-1}). 

\begin{theorem}\label{prop:cat-Q-bundles}
There are equivalences of categories, 
preserving the identity objects, dualization and tensor product,
between
\begin{itemize}
\item the category 
of $G$-equivariant holomorphic vector bundles on $X\times G/P$,
\item the category 
of $P$-equivariant holomorphic vector bundles on $X$,
\item the category 
of holomorphic $(Q,\cK)$-bundles on $X$.
\end{itemize}
\end{theorem}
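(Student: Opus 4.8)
The plan is to relativize over $X$ the chain of equivalences of Section~\ref{sec2}, replacing finite-dimensional complex vector spaces throughout by holomorphic vector bundles on $X$.

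I would first treat the equivalence between the first two categories, which is the vector-bundle analogue of Lemma~\ref{lem:induction-reduction}. The functor is restriction to the slice $X \cong X \times \{o\} \subset X \times G/P$, where $o = P \in G/P$ has isotropy group $P$; it sends a $G$-equivariant holomorphic vector bundle on $X \times G/P$ to the induced $P$-equivariant bundle on $X$, and a $G$-equivariant morphism to its restriction. A quasi-inverse is the induction functor $\VV \mapsto G \times^P \VV$ of~\eqref{eq:induced-pfb}, whose construction in the proof of Lemma~\ref{lem:induction-reduction} applies verbatim to vector bundles and to all morphisms, not merely isomorphisms. Since the identity object, dual and tensor product of equivariant vector bundles are formed fibrewise, and both restriction and $G \times^P(-)$ are defined fibrewise, this equivalence manifestly carries the three operations to one another.

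Next I would relativize the equivalence $\Rep{P} \cong \mod{Q,\cK}$ of Theorem~\ref{thm:equiv-P-mod-QK-mod}. A $P$-equivariant holomorphic vector bundle $\VV$ on $X$ (for the trivial action on the base) is a holomorphic family of $P$-modules, and applying the functor of $L$-invariants yields holomorphic multiplicity bundles $E_\lambda \defeq \underline{\Hom}_L(M_\lambda, \VV)$ together with a relative isotypical decomposition $\VV \cong \bigoplus_{\lambda \in Q_0} E_\lambda \otimes M_\lambda$, the bundle form of~\eqref{eq:isotopical-dec-V}. The $L$-equivariant endomorphism-valued map $\tau = d\rho|_{\glu} \colon \glu \to \underline{\End}\,\VV$ then decomposes, via the bundle version of~\eqref{eq:arrow-maps-equiv-maps}, into arrow homomorphisms $\phi^{(i)}_{\mu\lambda} \colon E_\lambda \to E_\mu$; the commutation relation~\eqref{eq:CR2} becomes exactly the set $\cK$, just as over a point. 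This produces a holomorphic $(Q,\cK)$-bundle $(\bE,\bphi)$, and the construction is reversible, giving the desired equivalence. To see that it respects the three operations, I would note that the identity object, the dual~\eqref{eq:dual-1} and the tensor product~\eqref{eq:tensor-product-1} on $\Vect{Q,\cK}$ are defined by the very formulas of Section~\ref{sub:Tannakian-QK-mod}, with the fixed data $A_{\mu\lambda}$, $C_\lambda$ and $C^\lambda_{\mu\nu}$ — all depending only on $L$ — unchanged and with $V_\lambda$ replaced by $E_\lambda$. The verification therefore reduces verbatim to the one in the proof of Lemma~\ref{lem1}: the comparison of~\eqref{eq:hat-tau} with~\eqref{eq:negative-phi} for duals, and of~\eqref{eq:tensor-tau} with~\eqref{eq:psi-tensor_product} for tensor products, consists of $L$-equivariant linear identities that hold fibrewise over each point of $X$, hence globally.

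The step I expect to need the most care is the relative isotypical decomposition. One must check that $\underline{\Hom}_L(M_\lambda, \VV)$ is genuinely a holomorphic vector bundle and that $\VV \mapsto (E_\lambda)_{\lambda \in Q_0}$ is an equivalence onto $Q_0$-graded holomorphic bundles. This rests on the reductivity of $L$: the algebraic Reynolds operator gives an $\cO_X$-linear idempotent projecting $M_\lambda^* \otimes \VV$ onto its $L$-invariants, whose image is therefore a holomorphic direct summand, while irreducibility of $X$ ensures the ranks $\rk E_\lambda$ are constant. Once this relative decomposition is in place, everything else follows from the pointwise statements already established in Section~\ref{sec2}.
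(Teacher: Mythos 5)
Your proposal is correct and takes essentially the same route as the paper: the two relativized equivalences you construct (induction/restriction between $X\times G/P$ and $X$, and the relative isotypical decomposition giving $P$-equivariant bundles $\cong$ $(Q,\cK)$-bundles) are exactly the results the paper imports from \cite[Lemma~2.8 and Theorem~2.5]{AG2}, and your fibrewise check of compatibility with the identity object, dual and tensor product is precisely the reduction to the proof of Lemma~\ref{lem1} that the paper invokes. The only difference is that you sketch proofs of the cited [AG2] equivalences (via the Reynolds-operator argument for the relative decomposition) instead of citing them.
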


\begin{proof}
This follows as in the part of the proof of
Lemma~\ref{ex:Tannakian-Rep} and Proposition~\ref{prop:cat-Q-bundles}
concerning the identity objects, the dualization and the tensor product,
with~\cite[Lemma~2.8 and Theorem 2.5]{AG2} playing the roles of
Lemma~\ref{lem:ind-red-homgeneous-vect} and
Theorem~\ref{thm:equiv-P-mod-QK-mod}, respectively.
\end{proof}

\subsection{Equivariant principal bundles as torsors in quiver bundles}
\label{sub:Tannaka-equiv-bundles}

Let $(Q,\cK)$ be the quiver with relations associated to the parabolic
subgroup $P\subsetneq G$. In the following definition (cf.,
e.g.,~\cite[p. 82]{Si}), the adjectives ``strict exact faithful
tensor'' applied to a functor mean the four abstract conditions
$\rm{F}_1$--$\rm{F}_4$ listed by Nori~\cite[p. 77]{No} (in particular,
the functor must be compatible with the operations of taking dual and
tensor product).

\begin{definition}\label{def:torsor}
An \emph{$H$-torsor in holomorphic $(Q,\cK)$-bundles over $X$} is a 
strict exact faithful tensor functor
\begin{equation}\label{eq:thm1}
\FF\colon\Rep{H}\longrightarrow\Vect{Q,\cK}.
\end{equation}
\end{definition}

\begin{theorem}\label{thm1}
There is a bijective correspondence between the $P$-equivariant
holomorphic principal $H$-bundles on $X$ and the $H$-torsors in
holomorphic $(Q,\cK)$-bundles over $X$.
\end{theorem}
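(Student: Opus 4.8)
The plan is to use Nori's Tannakian formalism, now that \secref{sub:Q-bundles} has equipped $\Vect{Q,\cK}$ with an identity object, a dualization functor, and a tensor product, exactly mirroring the neutral Tannakian structure on $\mod{Q,\cK}$ from Lemma~\ref{lem1}. Recall that Nori's approach characterizes a principal $H$-bundle on a base as a strict exact faithful tensor functor from $\Rep{H}$ to vector bundles on that base, via the associated-bundle construction $V\mapsto E_H\times^H V$. So the strategy is to transport this correspondence through the equivalence of \secref{sub:Q-bundles} between $P$-equivariant holomorphic vector bundles on $X$ and holomorphic $(Q,\cK)$-bundles on $X$.

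First I would recall that, by Nori's theorem, a $P$-equivariant holomorphic principal $H$-bundle $E_H$ on $X$ is equivalent to a strict exact faithful tensor functor from $\Rep{H}$ to the category $\VectP$ of $P$-equivariant holomorphic vector bundles on $X$: given $E_H$, one sends each $H$-module $V$ to the associated $P$-equivariant vector bundle $E_H\times^H V$, and the $P$-equivariance of $E_H$ endows each such bundle with its $P$-equivariant structure functorially. The key input here is that this functor-to-bundle dictionary is itself an equivalence of categories (this is precisely Nori's formulation, the conditions $\rm{F}_1$--$\rm{F}_4$), applied to the category $\VectP$ in place of ordinary vector bundles. One must check that $\VectP$, with its identity object, dual, and tensor product, is a tensor category in which Nori's reconstruction applies; since $X$ is projective, the relevant finiteness and faithful-exactness hypotheses hold.

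Next I would invoke \secref{sub:Q-bundles}, which furnishes an equivalence of categories $\VectP\cong\Vect{Q,\cK}$ that preserves identity objects, dualization, and tensor product — that is, an equivalence of tensor categories. Composing a strict exact faithful tensor functor $\Rep{H}\to\VectP$ with this tensor equivalence yields a strict exact faithful tensor functor $\Rep{H}\to\Vect{Q,\cK}$, namely an $H$-torsor in holomorphic $(Q,\cK)$-bundles in the sense of Definition~\ref{def:torsor}, and conversely. Because the equivalence of \secref{sub:Q-bundles} is compatible with the tensor structure, it carries strict exact faithful tensor functors to strict exact faithful tensor functors in both directions, and carries natural tensor isomorphisms to natural tensor isomorphisms; this gives the claimed bijective correspondence at the level of functors, hence between $P$-equivariant principal $H$-bundles on $X$ and $H$-torsors in $\Vect{Q,\cK}$.

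The main obstacle I anticipate is verifying that Nori's reconstruction theorem genuinely applies in the $P$-equivariant setting, i.e. that a strict exact faithful tensor functor $\Rep{H}\to\VectP$ really arises from a (unique up to isomorphism) $P$-equivariant principal $H$-bundle. Nori's original argument is phrased for an ordinary projective variety and the category of vector bundles; here the target is the category of $P$-equivariant bundles, and one must confirm that the forgetful functor to ordinary bundles, combined with the $P$-equivariant structure, lets Nori's construction go through — in particular that the reconstructed bundle $E_H$ inherits a canonical $P$-action compatible with the equivariant structures on all the associated bundles $E_H\times^H V$. The cleanest route is to observe that the $P$-action can be encoded Tannakianly (for instance by treating $P$-equivariance as descent data, or by noting that $\VectP$ is itself the representation category of an appropriate groupoid/gerbe over $X$) and to check that strict exact faithful tensor functors into $\VectP$ are in bijection with $P$-equivariant $H$-bundles by naturality of Nori's construction; once this equivariant version of Nori's theorem is in hand, the remainder of the argument is the formal transport through the tensor equivalence of \secref{sub:Q-bundles}.
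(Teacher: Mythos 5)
Your overall strategy --- Nori's formalism combined with the tensor equivalence between $P$-equivariant bundles and $(Q,\cK)$-bundles --- is the right frame, and your forward direction (associated bundles $W\mapsto E_H\times^H W$ with their induced $P$-equivariant structure, transported through the equivalence of \secref{sub:Q-bundles}) agrees with the paper. But there is a genuine gap in the converse direction, and you have located it yourself without filling it: everything hinges on an ``equivariant version of Nori's theorem'' asserting that strict exact faithful tensor functors from $\Rep{H}$ into the category of $P$-equivariant holomorphic vector bundles on $X$ correspond to $P$-equivariant principal $H$-bundles. Given the tensor equivalence of \secref{sub:Q-bundles}, that assertion \emph{is} the theorem being proved, so invoking it --- or gesturing at descent data and groupoid/gerbe formulations that are never carried out --- is circular. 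Nori's results in~\cite{No} are stated for the target category of ordinary vector bundles on a variety, and they do not apply verbatim when the target is an equivariant (or quiver) category; bridging that mismatch is the actual content of the theorem.

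The paper closes precisely this gap by a concrete two-step construction. First, it composes $\FF$ with the bundle-level fibre functor $S\colon(\bE,\bphi)\mapsto\bigoplus_{\lambda}E_\lambda\otimes M_\lambda$, landing in ordinary vector bundles on $X$; the composite $S\circ\FF$ is strict exact faithful tensor, so plain Nori applies and produces an ordinary principal $H$-bundle $E_H(\FF)$ on $X$. Second --- the step your proposal is missing --- it constructs the $P$-action on $E_H(\FF)$ by hand: each bundle $S\circ\FF(W)$ carries a $P$-action (via the equivalence of \secref{sub:Q-bundles}), which yields an automorphism $A_W$ of the pullback of $S\circ\FF(W)$ to $X\times P$ (over the point $(x,g)$ it acts by $g\in P$); replacing the image of each morphism $\rho\colon W_1\to W_2$ by $A_{W_2}\circ\rho'\circ A_{W_1}^{-1}$ gives a new functor into bundles on $X\times P$, hence an automorphism of the pulled-back principal bundle, i.e.\ a morphism $\psi\colon P\times E_H(\FF)\to E_H(\FF)$ commuting with the right $H$-action. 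Finally, to verify that $\psi$ satisfies the axioms of a group action, the paper embeds $E_H(\FF)$ into the extension $E_{\GL(W)}(\FF)=E_H(\FF)\times^H\GL(W)$ for a faithful $H$-module $W$, where $\psi$ is visibly the restriction of a $P$-action. Your proposal would become a proof only after supplying this construction (or a genuine substitute for it), not merely postulating that ``Nori's construction is natural enough'' to go through equivariantly.
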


\begin{proof}
Let $E_H$ be a $P$-equivariant holomorphic principal $H$-bundle over
$X$. For each $H$-module $W\in\Rep{H}$, we have the associated
holomorphic vector bundle
\[
E_W=E_H(W)\defeq E_H\times^H W \longrightarrow X.
\]
The action of $P$ on $E_W$ and the trivial action of $P$ on $W$
together produce an action of $P$ on $E_H\times W$. This action
descends to an action of $P$ on $E_W$. Now by
Theorem~\ref{prop:cat-Q-bundles}, the $P$-equivariant holomorphic
vector bundle $E_W$ produces an object $\FF(W)$ of $\Vect{Q,\cK}$.

For the converse direction, let $\FF$ be an $H$-torsor in holomorphic
$(Q,\cK)$-bundles over $X$, i.e. a strict exact faithful tensor
functor, as in~\eqref{eq:thm1}. Let $\VectX$ be the category of
holomorphic vector bundles on $X$. Define the functor 
\begin{equation}\label{eq:functor-S}
S\colon\Vect{Q,\cK}\longrightarrow\VectX, \quad
(\bE,\bphi)\longmapsto\bigoplus_{\lambda\in Q_0}E_\lambda\otimes M_\lambda.
\end{equation}
Then the functor $S\circ\FF\colon\Rep{H}\to\VectX$ produces an
algebraic principal $H$-bundle over $X$
\cite[Lemma~2.3~and~Proposition~2.4]{No}. This principal $H$-bundle
over $X$ will be denoted by $E_H(\FF)$.
Our aim is to construct an action of $P$ on the principal $H$-bundle
$E_H(\FF)$.
Let
\[
p_X\colon X\times P\longrightarrow X
\]
be the canonical projection onto the first factor. Let $\VectXP$ be
the category of holomorphic vector bundles on $X\times P$. Consider
the functor
\begin{equation}\label{c1}
p^*_X\circ S\circ\FF\colon\Rep{H}\longrightarrow\VectXP
\end{equation}
that maps any $W\in\Rep{H}$ into the pulled back vector bundle $p^*_X
(S\circ\FF(W))$. As above, this functor defines a principal $H$-bundle
on $X\times P$ \cite[Lemma~2.3~and~Proposition~2.4]{No}; this
principal $H$-bundle on $X\times P$ will be denoted by
$\widetilde{E}_H(\FF)$. The principal $H$-bundle
$\widetilde{E}_H(\FF)$ is evidently identified with the pullback
$p^*_XE_H(\FF)$.

For any $W\in\Rep{H}$, the vector bundle $p^*_X(S\circ\FF(W))$ has a
natural automorphism
\begin{equation}\label{c2}
A_W\colon p^*_X (S\circ\FF(W))\longrightarrow p^*_X (S\circ\FF(W)), 
\end{equation}
which is constructed as follows: by Theorem~\ref{prop:cat-Q-bundles},
the vector bundle $S\circ\FF(W)$ on $X$ is equipped with an action of
$P$; this action of $P$ on $S\circ\FF(W)$ produces an automorphism of
the pullback $p^*_X(S\circ\FF(W))$. More precisely, the automorphism
sends any $w\in(p^*_X(S\circ\FF(W)))_{(x,g)}=(S\circ\FF(W))_x$ to the
image of $w$ under the action of $g\in P$.

Now we have a functor
\[
A\colon\Rep{H}\longrightarrow\VectXP
\]
that sends any $W\in\Rep{H}$ to $p^*_X(S\circ\FF(W))$, and sends any
homomorphism of $H$-modules
\[
\rho\colon W_1\longrightarrow W_2
\]
to $A_{W_2}\circ \rho'\circ A^{-1}_{W_1}$, where $\rho'\colon p^*_X
(S\circ\FF(W_1)) \longrightarrow p^*_X (S\circ\FF(W_2))$ is the image
of $\rho$ by the functor $p^*_X\circ S\circ\FF$.

The functor $A$ defined above produces an automorphism of the
principal $H$-bundle $p^*_XE_H(\FF)$. This automorphism defines a
morphism
\[
\psi\colon P\times E_H(\FF)\longrightarrow E_H(\FF),
\]
which commutes with the right action of $H$. To show that $\psi$ is an
action of $P$, take a faithful $H$-module $W$. Consider the principal
$\text{GL}(W)$-bundle
$E_{\text{GL}(W)}(\FF)=E_H(\FF)\times^H\text{GL}(W)$ over $X$ obtained
by extending the structure group of $E_H(\FF)$ using the homomorphism
$\rho_W\colon H\longrightarrow\text{GL}(W)$ given by the action of $H$
on $W$. Since $E_H(\FF)$ is embedded in the total space
$E_{\text{GL}(W)}(\FF)$ (as $\rho_W$ is injective), and $\psi$ is the
restriction of a $P$-action on this principal $\text{GL}(W)$-bundle
$E_{\text{GL}(W)}(\FF)$, we conclude that $\psi$ defines a $P$-action
on $E_H(\FF)$.
\end{proof}

Combining Theorem~\ref{thm1} with
Lemma~\ref{lem:induction-reduction}, we obtain the following.

\begin{corollary}\label{cor1}
There is a bijective correspondence between the $G$-equivariant
principal $H$-bundles on $X\times G/P$ and the $H$-torsors in
holomorphic $(Q,\cK)$-bundles over $X$.
\end{corollary}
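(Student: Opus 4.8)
The plan is simply to compose the two correspondences already established in this section. Lemma~\ref{lem:induction-reduction} provides an equivalence between the groupoid of $G$-equivariant holomorphic principal $H$-bundles on $X\times G/P$ and the groupoid of $P$-equivariant holomorphic principal $H$-bundles on $X$; explicitly, one direction restricts a bundle $E_H'$ to the slice $X\cong X\times P/P\subset X\times G/P$, and the other sends $E_H$ to the induced bundle $G\times^P E_H$ of~\eqref{eq:induced-pfb}. Theorem~\ref{thm1} provides a bijective correspondence between the $P$-equivariant holomorphic principal $H$-bundles on $X$ and the $H$-torsors in holomorphic $(Q,\cK)$-bundles over $X$.

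First I would pass from the equivalence of groupoids in Lemma~\ref{lem:induction-reduction} to the induced bijection on the relevant isomorphism classes of objects, using that the induction and restriction functors are mutually quasi-inverse. Composing this bijection with the one supplied by Theorem~\ref{thm1} yields a bijective correspondence between the $G$-equivariant principal $H$-bundles on $X\times G/P$ and the $H$-torsors in holomorphic $(Q,\cK)$-bundles over $X$, which is exactly the assertion of the corollary.

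The only point requiring care --- and the main, if minor, obstacle --- is to reconcile the way the two inputs are phrased: Lemma~\ref{lem:induction-reduction} is stated as an equivalence of groupoids, whereas Theorem~\ref{thm1} is stated as a bijective correspondence of objects. Thus I would verify that the restriction and induction functors are mutually inverse up to natural isomorphism, so that they descend to a genuine bijection at the level on which Theorem~\ref{thm1} operates, and confirm that this bijection matches the $P$-equivariant bundle on $X$ that is fed into the torsor construction. Once this bookkeeping is in place, no further argument is needed: the corollary is a formal consequence of the two cited results.
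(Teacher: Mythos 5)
Your proposal is correct and matches the paper exactly: the paper offers no separate argument for this corollary beyond the sentence ``Combining Theorem~\ref{thm1} with Lemma~\ref{lem:induction-reduction}, we obtain the following,'' which is precisely your composition of the two correspondences. Your extra care about passing from the groupoid equivalence to a bijection is a reasonable bookkeeping remark, but it is the same formal step the paper takes implicitly.
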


\section{Dimensional reduction for equivariant principal bundles}
\label{sec:dim-red}

In this section, we study algebro-geometric (semi/poly)stability
conditions for equivariant holomorphic principal $H$-bundles on
$X\times G/P$, their dimensional reduction to $X$, via the
correspondences of Lemma~\ref{lem:induction-reduction},
Theorem~\ref{thm1} and Corollary~\ref{cor1}, and the corresponding
gauge-theoretic equations. Throughout this section, we use the
notation of~\secref{sub:preliminaries}, so $X$ is a smooth irreducible
complex projective variety, $H$ is a complex connected reductive
affine algebraic group, $G$ is a connected simply connected semisimple
complex affine algebraic group, $P\subsetneq G$ is a parabolic
subgroup, and $L\subset P$ is a Levi subgroup. The Lie algebras of
$L\subset P\subset G$ are denoted $\gll\subset\glp\subset\glg$,
respectively, and $(Q,\cK)$ is the quiver with relations associated to
$P$.

\subsection{Semistable and polystable $G$-equivariant principal
bundles on $X\times G/P$}
\label{sub:ss-G-equiv}

Choose a Cartan subalgebra $\glt\subset\gll\subset\glg$, and a system
$\cS$ of simple roots of $\glg$ with respect to $\glt$, such that all
the negative roots of $\glg$ with respect to $\glt$ are roots of
$\glp$. Let $\Delta_+(\glr)$ be the set of positive roots of $\glg$
with respect to $\glt$ and $\cS$ that are not roots of $\gll$ (this
notation is motivated in~\cite[\S 4.2.1]{AG2}), and $\Sigma\subset\cS$
the set of simple roots of $\glg$ that are not roots of $\glp$. Let
$K\subset G$ be a maximal compact Lie subgroup. Then the $K$-invariant
K\"ahler 2-forms on the complex $G$-manifold $K/(K\cap P)\cong G/P$
are parametrized by elements
$\bepsilon=\{\epsilon_\alpha\}_{\alpha\in\Sigma}$ of
$\RR_{>0}^\Sigma$, where $\RR_{>0}$ and $\RR_{>0}^\Sigma$ denote the
set of positive real numbers and the set of maps
\begin{equation}\label{eq:bepsilon}
\bepsilon\colon\Sigma\longrightarrow\RR_{>0},\,
\alpha\longmapsto\epsilon_\alpha,
\end{equation}
respectively (see~\cite[p. 38, Lemma 4.8]{AG2}). The K\"ahler form on
$G/P$ associated to $\bepsilon$ will be denoted $\omega_{\bepsilon}$.
We equip the complex manifolds $X$, $G/P$ and $X\times G/P$ with fixed
K\"ahler 2-forms $\omega$, $\omega_\bepsilon$ and $p^*_1\omega
+p^*_2\omega_\bepsilon$, respectively, where $p_i$ is the projection
of $X\times G/P$ to the $i$-th factor.
We define the degree of a holomorphic vector bundle $E$ on $X$ by the
formula
\begin{equation}\label{eq:deg}
\deg E=\frac{2\pi}{(n-1)!\Vol(X)}\int_Xc_1(E)\wedge\omega^{n-1},
\end{equation}
where $n=\dim_\CC X$ and $\Vol(X)=\int_X \omega^n/n!$ is the volume of
$X$. We use this normalization convention also for the degree of
torsion-free coherent sheaves on $X$, and for the degree of
torsion-free coherent sheaves on $X\times G/P$ with respect to the
K\"ahler form $p^*_1\omega +p^*_2\omega_\bepsilon$.

We now recall the definition of a semistable principal $H$-bundle (see
\cite{Ra,RR,AB}). Let $Z_0(H)$ be the connected component of the
center of $H$ containing the identity element.

\begin{definition}\label{ds1}
A holomorphic principal $H$-bundle $E_H$ on $X\times G/P$
is called \emph{semistable} (respectively, \emph{stable}) if for every
quadruple $(B,U,E_B,\chi)$, where
\begin{itemize}
\item $B$ is a proper parabolic subgroup of $H$,
\item $\iota\colon U\hookrightarrow X\times G/P$ is a non-empty Zariski
open subset such that the complement
$U^c$ is of complex codimension at least two,
\item $E_B\subset E_H\vert_U$ is a holomorphic reduction of structure
group to $B$ over the open subset $U$, and
\item $\chi$ is a strictly anti-dominant character of $B$ trivial over $Z_0(H)$,
\end{itemize}
the inequality $\deg(\iota_*E_B(\chi))\geq 0$ (respectively,
$\deg(\iota_* E_B(\chi))>0$) holds, where $E_B(\chi)$ is the
holomorphic line bundle over $U$ associated to the principal
$B$-bundle $E_B$ for the character $\chi$. 
\end{definition}

It should be clarified that since the complex codimension of $U^c$ is
at least 2, 
the degree $\deg(\iota_*E_B(\chi))$ of the direct image
$\iota_*E_B(\chi)$ is well-defined.

\begin{definition}\label{ds2}
A $G$-equivariant holomorphic principal $H$-bundle $E_H$ on
$X\times G/P$ is called \textit{equivariantly semistable}
(respectively, \textit{equivariantly stable}) if the condition in
Definition~\ref{ds1} for semistability (respectively, stability) holds
for all quadruples $(B,U,E_B,\chi)$ with the extra condition that both 
$U\subset X\times G/P$ and $E_B\subset E_H\vert_U$ are $G$-invariant.
\end{definition}

\begin{lemma}\label{le-sst}
A $G$-equivariant holomorphic principal $H$-bundle $E_H$ on
$X\times G/P$ is semi\-stable if and only if it is equivariantly
semistable.
\end{lemma}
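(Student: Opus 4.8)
The plan is to prove the two implications separately, with only the second requiring real work. If $E_H$ is semistable in the sense of Definition~\ref{ds1}, then the defining inequality $\deg(\iota_*E_B(\chi))\geq 0$ holds for \emph{every} quadruple $(B,U,E_B,\chi)$, and in particular for those in which $U\subset X\times G/P$ and $E_B\subset E_H\vert_U$ are $G$-invariant; by Definition~\ref{ds2} this says exactly that $E_H$ is equivariantly semistable. So this direction is immediate, and the whole content of the lemma lies in the converse.

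For the converse I would argue by contraposition: assuming $E_H$ is \emph{not} semistable, I would produce a \emph{$G$-invariant} destabilizing quadruple, contradicting equivariant semistability. The tool is the canonical (Harder--Narasimhan) reduction of a principal bundle. Over the smooth projective variety $X\times G/P$, a principal $H$-bundle that fails to be semistable admits a canonical reduction of structure group to a proper parabolic subgroup $B\subset H$, defined over a big open subset $U$ (with $U^c$ of complex codimension at least two), which witnesses the instability: there is a strictly anti-dominant character $\chi$ of $B$, trivial on $Z_0(H)$, with $\deg(\iota_*E_B(\chi))<0$ (see \cite{RR,AB}). This yields a quadruple $(B,U,E_B,\chi)$ violating the inequality of Definition~\ref{ds1}.

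The crucial feature of this reduction is its \emph{canonicity}: it is uniquely determined by $E_H$, hence preserved by every holomorphic automorphism of $E_H$. The $G$-equivariant structure provides, for each $g\in G$, an isomorphism between $E_H$ and the pullback of $E_H$ under the base automorphism $p\mapsto g\cdot p$ of $X\times G/P$. Since the canonical reduction is functorial under such isomorphisms, it is carried to itself. Concretely, both the open set $U$ (which is $G$-invariant, being the intrinsically defined locus of the canonical reduction) and the reduction $E_B\subset E_H\vert_U$ are $G$-invariant. The resulting destabilizing quadruple is therefore $G$-invariant, contradicting the equivariant semistability of $E_H$ and completing the contrapositive.

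The main obstacle is the input from the theory of canonical reductions over a higher-dimensional base: one must invoke the existence and, above all, the \emph{uniqueness} of the Harder--Narasimhan reduction in the precise setting of Definition~\ref{ds1} (reductions over an open subset whose complement has codimension at least two, with degrees computed via direct images), and then check that the equivariant isomorphisms genuinely fix it, including that $U$ may be taken $G$-invariant. Once canonicity is established, the $G$-invariance of the destabilizing datum, and hence the contradiction, are formal.
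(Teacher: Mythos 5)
Your proposal is correct and follows essentially the same route as the paper: both directions hinge on the Harder--Narasimhan (canonical) reduction of \cite[Theorem 1]{AAB}, whose uniqueness forces the destabilizing open set $U$ and reduction $E_B$ to be $G$-invariant, at which point equivariant semistability gives the conclusion. The only difference is purely organizational --- you argue by contraposition (producing a $G$-invariant destabilizing quadruple from instability), whereas the paper argues directly (equivariant semistability forces the $G$-invariant HN reduction to have $B=H$) --- and this is a logically equivalent packaging of the identical idea.
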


\begin{proof}
If $E_H$ is semistable, then clearly it is equivariantly semistable.
To prove the converse, assume that $E_H$ equivariantly semistable.
Let $E_B\,\subset\, E_H\vert_U$ be the Harder--Narasimhan reduction of
$E_H$ (see \cite[Theorem 1]{AAB}). From the uniqueness of the
Harder--Narasimhan reduction it follows that both $U$ and $E_B$ are
preserved by the actions of $G$ on $X\times G/P$ and $E_H$
respectively. Since $E_H$ is equivariantly semistable, from the
conditions on the Harder--Narasimhan reduction it follows immediately
that $B=H$. This implies that $E_H$ is semistable. (See also~\cite[Lemma 4.1]{Bi}.)
\end{proof}

Let $E_H$ be a holomorphic principal $H$-bundle over $X\times G/P$. A
holomorphic reduction $E_B\subset E_H$ of structure group to some
parabolic subgroup $B\subset H$ is called \emph{admissible} if
\[
\deg(E_B(\chi))=0
\]
for each character $\chi$ of $B$ which is trivial on $Z_0(H)$, where
$E_B(\chi)$ is the line bundle over $X\times G/P$ associated to the
principal $B$-bundle $E_B$ for the character $\chi$.

\begin{definition}\label{ds3}
A holomorphic principal $H$-bundle $E_H$ on $X\times G/P$
is called \emph{polystable} if either $E_H$ is stable,
or there is a proper parabolic subgroup $B\subset H$ and a
holomorphic reduction of structure group
\[
E_{L(B)}\subset E_H
\]
of $E_H$ to a Levi subgroup $L(B)\subset B$ over $X\times G/P$, such
that the following two conditions are satisfied:
\begin{itemize}
\item the principal  $L(B)$-bundle $E_{L(B)}$ is
stable, and
\item the reduction of structure group of $E_H$ to $B$, obtained by
extending the structure group of $E_{L(B)}$ using the inclusion of $L(B)$
in $B$, is admissible.
\end{itemize}
\end{definition}

\begin{definition}\label{ds4}
A $G$-equivariant holomorphic principal $H$-bundle $E_H$ on
$X\times G/P$ is called \emph{equivariantly polystable} if
either $E_H$ is equivariantly stable, or there is a proper
parabolic subgroup $B \,\subset\, H$ and a $G$-equivariant
holomorphic reduction of structure group
\[
E_{L(B)}\subset E_H
\]
of $E_H$ to a Levi subgroup $L(B)\subset B$ over $X\times G/P$ such
that the following two conditions are satisfied:
\begin{itemize}
\item the principal $L(B)$-bundle $E_{L(B)}$ is equivariantly stable,
  and
\item the reduction of structure group of $E_H$ to $B$, obtained by
  extending the structure group of $E_{L(B)}$ using the inclusion of
  $L(B)$ in $B$, is admissible.
\end{itemize}
\end{definition}

\begin{lemma}\label{le-pst}
A $G$-equivariant holomorphic principal $H$-bundle $E_H$ on
$X\times G/P$ is polystable if and only if it is equivariantly
polystable.
\end{lemma}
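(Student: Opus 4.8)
The plan is to mirror the proof of Lemma~\ref{le-sst}, replacing the Harder--Narasimhan reduction by the canonical socle (maximal polystable) reduction of a semistable principal bundle, whose uniqueness again forces it to be $G$-invariant. Throughout I would lean on Lemma~\ref{le-sst}, which already identifies semistability with equivariant semistability, together with the uniqueness of the canonical reduction attached to a semistable $H$-bundle (from the same circle of ideas as the Harder--Narasimhan reduction used in Lemma~\ref{le-sst}).

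First I would dispose of the implication that polystability implies equivariant polystability. If $E_H$ is stable then it is equivariantly stable, since the inequalities of Definition~\ref{ds2} are a subset of those of Definition~\ref{ds1}, and hence $E_H$ is equivariantly polystable. If $E_H$ is polystable but not stable, Definition~\ref{ds3} provides a reduction $E_{L(B)}$ with $E_{L(B)}$ stable and admissible extension; I would take the underlying reduction to be the socle reduction, which is canonical and therefore preserved by the $G$-action on $E_H$, exactly as uniqueness of the Harder--Narasimhan reduction was used in Lemma~\ref{le-sst}. Because $G$ is connected it cannot permute the distinct stable factors, so it preserves each isotypic component; refining $G$-equivariantly using the $G$-module structure on the multiplicity spaces (and reductivity of $G$) then yields a $G$-equivariant reduction whose Levi bundle is equivariantly stable. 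Admissibility is a condition on degrees of $G$-invariant line bundles and is unaffected, so $E_H$ is equivariantly polystable.

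For the converse I would first show that equivariant stability already forces polystability. By Lemma~\ref{le-sst} an equivariantly stable $E_H$ is semistable, hence carries a canonical socle reduction $E_B$ to a parabolic $B\subset H$; by uniqueness this reduction is $G$-invariant. If $B\neq H$ it is a proper $G$-invariant admissible reduction, so $\deg(E_B(\chi))=0$ for every character $\chi$ of $B$ trivial on $Z_0(H)$, including a strictly anti-dominant one, contradicting the strict inequality demanded by equivariant stability in Definition~\ref{ds2}. Hence $B=H$, the socle is all of $E_H$, and $E_H$ is polystable. For a general equivariantly polystable $E_H$, Definition~\ref{ds4} supplies a $G$-equivariant reduction $E_{L(B)}$ with $E_{L(B)}$ equivariantly stable; applying the previous step to the reductive group $L(B)$ shows $E_{L(B)}$ is polystable as an $L(B)$-bundle, and composing its polystable reduction with the admissible reduction $E_{L(B)}\subset E_H$ produces a reduction of $E_H$ of the form required by Definition~\ref{ds3}.

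The hard part will be the converse, in two respects. First, the transitivity step must be checked carefully: one has to verify that composing the admissible reduction $E_{L(B)}\subset E_H$ with a polystable reduction of the $L(B)$-bundle $E_{L(B)}$ yields a genuine parabolic $B'\subset H$, a Levi reduction to $L(B')$ with \emph{stable} Levi bundle, and an extension to $B'$ that is again admissible, so that Definition~\ref{ds3} is satisfied verbatim. Second, one must ensure that the admissibility of the socle reduction really does contradict equivariant stability, i.e.\ that for a proper $B$ there exists a strictly anti-dominant character of $B$ trivial on $Z_0(H)$; this is exactly where the hypotheses that $H$ is reductive and that the character is centrally trivial are used. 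The representation-theoretic refinement in the first direction --- splitting the multiplicity spaces as $G$-modules to obtain a genuinely $G$-equivariant reduction --- also relies on reductivity of $G$ and deserves explicit justification.
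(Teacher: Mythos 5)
The paper's own ``proof'' of Lemma~\ref{le-pst} is a one-line citation to Lemma~4.2 of \cite{Bi}, so your proposal has to be judged against the underlying mathematics rather than a written argument; your overall strategy (replace the Harder--Narasimhan reduction of Lemma~\ref{le-sst} by a canonical ``socle''-type reduction and let uniqueness force $G$-invariance) is broadly in the spirit of that reference, but as written it contains two genuine gaps, one of which is a step that actually fails. In the direction ``polystable $\Rightarrow$ equivariantly polystable'', a polystable non-stable bundle has no ``socle reduction'' at all: the Levi reduction $E_{L(B)}\subset E_H$ witnessing Definition~\ref{ds3} is highly non-unique (for the frame bundle of $L\oplus L$, every splitting into two line subbundles isomorphic to $L$ gives one), so there is no canonical object whose uniqueness could make it $G$-invariant, in contrast with the Harder--Narasimhan reduction used in Lemma~\ref{le-sst}. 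The subsequent ``refine the isotypic components using the $G$-module structure on the multiplicity spaces'' is where all the content lies, it is only asserted, and it cannot produce what you promise: take $H=\GL(2,\CC)$, $G=\SL(2,\CC)$, $G/P=\PP^1$, and let $E_H$ be the frame bundle of the pullback to $X\times\PP^1$ of $\cO_{\PP^1}\otimes\CC^2$, with $G$ acting through the standard representation on $\CC^2$. This bundle is polystable and not stable, yet its only $G$-invariant line subbundle is the pullback of the tautological $\cO_{\PP^1}(-1)$, which has negative degree; hence there exists \emph{no} proper $G$-equivariant Levi reduction witnessing polystability --- the bundle is in fact equivariantly \emph{stable}. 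So the correct dichotomy is ``equivariantly stable / not equivariantly stable'', not ``stable / not stable'', and the real work in this direction is to show that a polystable bundle with no proper $G$-equivariant admissible reduction is equivariantly stable; your sketch never addresses this case.

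The converse direction has a different problem: it rests entirely on the existence, uniqueness and admissibility of a socle reduction for a semistable principal $H$-bundle over a compact K\"ahler manifold, together with the assertion that this reduction is trivial exactly when the bundle is polystable. For principal bundles in this generality that is a substantive theorem, not a formal analogue of the vector-bundle socle; it is not supplied by the paper's references (\cite{AAB} provides only the Harder--Narasimhan reduction), and you neither prove it nor cite a source for it. Granting it, the rest of your argument is sound: the contradiction between admissibility (degree zero for every character trivial on $Z_0(H)$) and the strict inequality demanded by Definition~\ref{ds2}, your reduction of the general equivariantly polystable case to the equivariantly stable case of the Levi bundle, and the transitivity-of-reductions step you flag are all workable. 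But as it stands, the proposal replaces the lemma by an unproven auxiliary statement of comparable depth, so it cannot be accepted as a proof.
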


\begin{proof}
The proof is identical to the proof of Lemma 4.2 of \cite{Bi};
we omit the details.
\end{proof}

\subsection{Semistable and polystable torsors in holomorphic quiver bundles}
\label{sub:ss-P-equiv}

Our definition of semistability and polystability of $H$-torsors in
$(Q,\cK)$-bundles over $X$ will rely on the corresponding notions for
quiver bundles. To define the latter, we need to enlarge the category
of holomorphic $(Q,\cK)$-bundles, considering the so-called
$(Q,\cK)$-sheaves on $X$. To do this, we simply replace the notion of
holomorphic vector bundle by coherent sheaf of $\cO_X$-modules in the
corresponding definitions of~\secref{sub:Q-bundles} (see~\cite[\S
2.1]{AG2} for details). In particular, a \emph{$Q$-sheaf} on $X$ is a
pair $(\bE,\bphi)$ consisting of a set $\bE$ of coherent sheaves
$E_\lambda$ on $X$, indexed by $\lambda\in Q_0$, and a set $\bphi$ of
sheaf homomorphisms $\phi_a\colon E_{ta}\to E_{ha}$, indexed by $a\in
Q_1$. Then $Q$-sheaves form an abelian category. Note that if a
$Q$-sheaf satisfies the set of relations $\cK$, then so do all its
$Q$-subsheaves. A $Q$-sheaf $(\bE,\bphi)$ is called \emph{torsion
  free} if so is $E_\lambda$, for all $\lambda\in Q_0$.

Fix a K\"ahler form $\omega$ on $X$, and $\bepsilon$ as
in~\eqref{eq:bepsilon}.
For each $\lambda\in Q_0$, we define the numbers
\begin{align*}&
n_\lambda\defeq\dim_{\mathbb{C}} M_\lambda\in\mathbb{N},
\\&
\tau'_\lambda\defeq -n_\lambda\sum_{\alpha\in \Delta_+(\glr)}
\epsilon^{-1}_\alpha\,\kappa(\lambda,\alpha^\vee)
\in\mathbb{R},
\end{align*}
(cf.~\cite[p. 36, (4.10)]{AG2}), where $M_\lambda$ is any irreducible
$L$-module in the isomorphism class $\lambda$
(see~\secref{sub:quiver-relation}), $\kappa$ is the Killing form on
$\glg$, $\alpha^\vee\defeq 2\alpha/\kappa(\alpha,\alpha)$, and
$\epsilon_\alpha$ is defined by
\[
\epsilon_\alpha\defeq\sum_{\beta\in \Sigma}
\epsilon_\beta\,\kappa(\lambda_\beta,\alpha^\vee),
\]
for $\alpha\in\Delta_+(\glr)\setminus\Sigma$.
The \emph{$\tau'$-degree} and the \emph{$\tau'$-slope} of a
torsion-free $Q$-sheaf $(\bE,\bphi)$ on $X$ are 
\begin{gather*}
\deg_{\tau'}(\bE,\bphi)\defeq
\sum_{\lambda\in Q_0}(n_\lambda\deg E_\lambda -\tau'_\lambda\rk E_\lambda),
\\
\mu_{\tau'}(\bE,\bphi)\defeq\frac{\deg_{\tau'}(\bE,\bphi)}
{{\displaystyle\sum_{\lambda\in Q_0}n_\lambda\rk E_\lambda}},
\end{gather*}
respectively, where the degree is defined as in~\eqref{eq:deg}, and
$\rk E_\lambda$ is the rank of $E_\lambda$.

A torsion-free $Q$-sheaf $(\bE,\bphi)$ on $X$ is called
\emph{semistable} if all its non-zero $Q$-subsheaves $(\bE',\bphi')$
satisfy
\[
\mu_{\tau'}(\bE',\bphi')\leq\mu_{\tau'}(\bE,\bphi).
\]
It is called \emph{stable} if this inequality is strict for all proper
$Q$-subsheaves, and \emph{polystable} if it is a direct sum of stable
$Q$-sheaves of the same $\tau'$-slope (see \cite[Definition
4.19]{AG2}).


\begin{definition}\label{def1}
An $H$-torsor $\FF\colon\Rep{H}\to\Vect{Q,\cK}$ is called
\emph{polystable} (respectively, \emph{semistable}) if the holomorphic
$(Q,\cK)$-bundle $\FF(W)$ is polystable (respectively, semistable) for
every indecomposable $H$-module $W$.
\end{definition}

The above definitions of $\tau'$-degree and $\tau'$-slope are
motivated by the following.

\begin{lemma}\label{lem:slopes}
Let $E$ be a $P$-equivariant holomorphic vector bundle $E$ over $X$,
and 
\[
E'\defeq G\times^PE
\]
the induced $G$-equivariant holomorphic vector bundle over $X\times
G/P$ (see Theorem~\ref{prop:cat-Q-bundles}). Then the degree and the
slope of $E'$ with respect to the K\"ahler form $p^*_1\omega
+p^*_2\omega_\bepsilon$ are respectively computed by the
$\tau'$-degree and the $\tau'$-slope of the holomorphic
$(Q,\cK)$-bundle $(\bE,\bphi)$ over $X$ associated to $E$:
\[
\deg E'=\deg_{\tau'}(\bE,\bphi), \quad \mu(E')=\mu_{\tau'}(\bE,\bphi).
\]
\end{lemma}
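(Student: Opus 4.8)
The plan is to reduce to the associated graded of the $P$-module structure, where the induced bundle becomes a direct sum of external tensor products, and then to integrate over $X\times G/P$ by K\"unneth together with a binomial expansion. First I would record that, as an $L$-equivariant holomorphic vector bundle, $E$ is its own isotypical decomposition $E\cong\bigoplus_{\lambda\in Q_0}E_\lambda\otimes M_\lambda$, where $E_\lambda$ is the multiplicity bundle on which $L$ acts trivially, so that $(\bE,\bphi)$ is exactly the associated $(Q,\cK)$-bundle (the bundle analogue of~\eqref{eq:isotopical-dec-V}). To replace $E$ by $\operatorname{gr}E$, I would fix a central cocharacter $\CC^*\to Z(L)$ for which $\glu=\bigoplus_{i>0}\glg_i$ is the strictly positive part; the induced $\CC^*$-action on $E$ defines a holomorphic weight grading by $P$-stable subbundles (as $R_u(P)$ strictly raises weights), which is $L$-equivariant and hence $L$-split because $L$ is reductive. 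Applying the exact functor $G\times^P(-)$ to the resulting $P$-equivariant filtration of $E$ produces a filtration of $E'=G\times^PE$ whose associated graded is $\bigoplus_{\lambda\in Q_0}G\times^P(E_\lambda\otimes M_\lambda)=\bigoplus_{\lambda\in Q_0}p_1^*E_\lambda\otimes p_2^*W_\lambda$, where $W_\lambda\defeq G\times^PM_\lambda$ is the homogeneous bundle on $G/P$ attached to $M_\lambda$ (with $R_u(P)$ acting trivially). Since the total Chern class is multiplicative over filtrations, $c_1(E')=\sum_{\lambda\in Q_0}c_1(p_1^*E_\lambda\otimes p_2^*W_\lambda)$.

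Next I would compute $c_1(p_1^*E_\lambda\otimes p_2^*W_\lambda)=n_\lambda\,p_1^*c_1(E_\lambda)+\rk E_\lambda\,p_2^*c_1(W_\lambda)$, using $\rk W_\lambda=\dim_\CC M_\lambda=n_\lambda$, and feed this into the degree formula for $E'$ relative to $p_1^*\omega+p_2^*\omega_\bepsilon$, writing $n=\dim_\CC X$, $d=\dim_\CC G/P$ and $N=n+d$. Expanding $(p_1^*\omega+p_2^*\omega_\bepsilon)^{N-1}$ by the binomial theorem and integrating by K\"unneth, dimension counting leaves a single term of each type: the $p_1^*c_1(E_\lambda)$ part pairs only with $(p_1^*\omega)^{n-1}(p_2^*\omega_\bepsilon)^{d}$, and the $p_2^*c_1(W_\lambda)$ part only with $(p_1^*\omega)^{n}(p_2^*\omega_\bepsilon)^{d-1}$. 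Using $\int_X\omega^n=n!\Vol(X)$, $\int_{G/P}\omega_\bepsilon^d=d!\Vol(G/P)$ and the consequent identity $\Vol(X\times G/P)=\Vol(X)\Vol(G/P)$, all binomial coefficients cancel and I would arrive at
\[
\deg E'=\sum_{\lambda\in Q_0}n_\lambda\deg E_\lambda+\sum_{\lambda\in Q_0}\rk E_\lambda\,\deg_{G/P}W_\lambda,
\]
where $\deg_{G/P}W_\lambda$ denotes the degree of $W_\lambda$ on $G/P$ computed with the normalization~\eqref{eq:deg} (with $d$ in place of $n$).

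The main obstacle, and the only genuinely geometric input, is to identify the second sum with the $\tau'$-correction. Here $\det W_\lambda=G\times^P\det M_\lambda$ is the homogeneous line bundle attached to the character by which $L$ acts on $\det M_\lambda$, and its degree against $\omega_\bepsilon$ is the linear expression in coroot pairings computed in~\cite[p.\ 36, (4.10)]{AG2}; this gives $\deg_{G/P}W_\lambda=n_\lambda\sum_{\alpha\in\Delta_+(\glr)}\epsilon_\alpha^{-1}\kappa(\lambda,\alpha^\vee)=-\tau'_\lambda$. Substituting this into the displayed formula yields $\deg E'=\sum_{\lambda\in Q_0}(n_\lambda\deg E_\lambda-\tau'_\lambda\rk E_\lambda)=\deg_{\tau'}(\bE,\bphi)$, the first asserted equality. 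Finally, since $G\times^P(-)$ preserves fibre dimension, $\rk E'=\rk E=\sum_{\lambda\in Q_0}n_\lambda\rk E_\lambda$, which is exactly the denominator appearing in $\mu_{\tau'}$; dividing the degree identity by this common rank gives $\mu(E')=\mu_{\tau'}(\bE,\bphi)$.
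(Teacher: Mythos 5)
Your proof is correct and takes essentially the same route as the paper's: both reduce $E'$ to the sum $\bigoplus_{\lambda\in Q_0}p_1^*E_\lambda\otimes p_2^*\cO_\lambda$ (the paper via the structural result \cite[Lemma~2.8 and Theorem~2.5]{AG2}, you via a $P$-stable weight filtration and additivity of $c_1$), and both then feed in the key external input $\deg_\bepsilon\cO_\lambda=-\tau'_\lambda$ from \cite[p.~36, Lemma~4.15, (4.10)]{AG2} and sum over $\lambda$. The only differences are ones of detail, in your favour: you make explicit the filtration argument justifying $c_1(E')=\sum_\lambda c_1(p_1^*E_\lambda\otimes p_2^*\cO_\lambda)$ and the K\"unneth/binomial computation with the volume normalization, both of which the paper leaves implicit.
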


\begin{proof}
Fix an irreducible $L$-module $M_\lambda$ in the isomorphism class
$\lambda$, for each $\lambda\in Q_0$, as
in~\secref{sub:quiver-relation}. Then $M_\lambda$ can also be
considered as an irreducible $P$-module, with the unipotent radical
$R_u(P)\subset P$ acting trivially (see~\cite[\S 1.1.3]{AG2}). Let
$\cO_\lambda=G\times^PM_\lambda$ be its associated homogeneous
holomorphic vector bundle over $G/P$ (see
Lemma~\ref{lem:ind-red-homgeneous-vect}). By~\cite[p. 36,
Lemma~4.15]{AG2} and the definition of $\tau'_\lambda$, the slope
of $\cO_\lambda$ with respect to $\omega_\bepsilon$ is
\[
\mu_\bepsilon(\cO_\lambda)\defeq\frac{\deg_\bepsilon\cO_\lambda}{\rk\cO_\lambda}=-\frac{\tau'_\lambda}{n_\lambda}
\]
where we use the normalization convention in~\eqref{eq:deg} for the
degree $\deg_\bepsilon\cO_\lambda$ of $\cO_\lambda$ with respect to
$\omega_\bepsilon$. Furthermore, applying the induction process of
Theorem~\ref{prop:cat-Q-bundles} (cf.~\cite[Lemma 2.8 and Theorem
2.5]{AG2}) to the image of~\eqref{eq:functor-S}, we see that $E'$ has
underlying vector bundle
\[
E'=\bigoplus_{\lambda\in Q_0}p^*_1E_\lambda\otimes p^*_2\cO_\lambda,
\]
so it has rank and degree (with respect to $p^*_1\omega
+p^*_2\omega_\bepsilon$) given by
\begin{align*}
\rk(E')&=\sum_{\lambda\in Q_0}\rk(p^*_1E_\lambda\otimes p^*_2\cO_\lambda)
=\sum_{\lambda\in Q_0}n_\lambda\rk{E_\lambda},\\
\deg E'&
=\sum_{\lambda\in Q_0}\deg(p^*_1E_\lambda\otimes p^*_2\cO_\lambda)
=\sum_{\lambda\in Q_0}n_\lambda(\deg{E_\lambda}+\mu_\bepsilon(\cO_\lambda)\rk{E_\lambda})
=\deg_{\tau'}(\bE,\bphi),
\end{align*}
respectively. The result about the slope now follows by diving these quantities.
\end{proof}

\subsection{Dimensional reduction, semistability and polystability}
\label{sub:preservation-ss}

\begin{theorem}\label{thm:preservation-ss}
Let $E'_H$ be a $G$-equivariant holomorphic principal $H$-bundle over
$X\times G/P$. Let $\FF\colon\Rep{H}\to\Vect{Q,\cK}$ be the
corresponding $H$-torsor in quiver bundles over $X$. Then $E'_H$ is
polystable (respectively, semistable) with respect to the K\"ahler
form $p^*_1\omega +p^*_2\omega_{\varepsilon}$ if and only if $\FF$ is
polystable (respectively, semistable).
\end{theorem}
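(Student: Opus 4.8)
The plan is to reduce the assertion about the principal bundle $E'_H$ to the corresponding assertion about its associated vector bundles, and then to feed the result into the dimensional-reduction correspondence for vector bundles of~\cite{AG2}. The bridge between principal bundles and vector bundles is the classical Tannakian characterization of (semi/poly)stability: since $H$ is connected reductive and $X\times G/P$ is projective, equipped with the K\"ahler polarization $p^*_1\omega+p^*_2\omega_\bepsilon$, a holomorphic principal $H$-bundle $E'_H$ is semistable (respectively, polystable) if and only if the associated holomorphic vector bundle $E'_H(W)=E'_H\times^HW$ is semistable (respectively, polystable) for every irreducible $H$-module $W$; for semistability this is the theorem of Ramanan--Ramanathan~\cite{RR}, and for polystability it follows from the Hermitian--Einstein correspondence on compact K\"ahler manifolds~\cite{AB}. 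Because $H$ is reductive and we are in characteristic zero, every $H$-module is completely reducible, so the indecomposable $H$-modules are exactly the irreducible ones; thus this characterization is phrased over precisely the class of modules $W$ appearing in Definition~\ref{def1}.

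First I would note that, for each $H$-module $W$, the $G$-action on $E'_H$ induces a $G$-action on $E'_H(W)$, making it a $G$-equivariant holomorphic vector bundle on $X\times G/P$. Since $E'_H=G\times^PE_H$, one has $E'_H(W)=G\times^P E_H(W)$, so tracing through Lemma~\ref{lem:induction-reduction}, Theorem~\ref{thm1} and Corollary~\ref{cor1} shows that $E'_H(W)$ is the $G$-equivariant bundle induced (as in Theorem~\ref{prop:cat-Q-bundles}) by the $P$-equivariant bundle $E_H(W)$ on $X$, hence corresponds to the holomorphic $(Q,\cK)$-bundle $\FF(W)$. This is the essential compatibility: the associated-bundle construction on the principal side matches the evaluation $W\mapsto\FF(W)$ on the torsor side. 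I would then apply the vector-bundle dimensional reduction of~\cite{AG2} to each $E'_H(W)$: the $G$-equivariant vector bundle $E'_H(W)$ is semistable (respectively, polystable) with respect to $p^*_1\omega+p^*_2\omega_\bepsilon$ if and only if the $(Q,\cK)$-bundle $\FF(W)$ is $\tau'$-semistable (respectively, $\tau'$-polystable). Here one uses that a $G$-equivariant holomorphic vector bundle is semistable (respectively, polystable) if and only if it is equivariantly so---its Harder--Narasimhan and socle filtrations being $G$-invariant, exactly as in Lemmas~\ref{le-sst} and~\ref{le-pst}---and that the relevant slopes match, which is Lemma~\ref{lem:slopes}. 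Chaining the Tannakian characterization of $E'_H$, the identification $E'_H(W)\leftrightarrow\FF(W)$ together with this vector-bundle dimensional reduction, and finally Definition~\ref{def1}, one concludes that $E'_H$ is semistable (respectively, polystable) if and only if $\FF(W)$ is $\tau'$-semistable (respectively, $\tau'$-polystable) for every indecomposable $W$, i.e. if and only if $\FF$ is semistable (respectively, polystable).

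The main obstacle is the Tannakian characterization itself, and specifically the implication that (semi/poly)stability of the principal bundle $E'_H$ forces (semi/poly)stability of \emph{all} its irreducible associated bundles $E'_H(W)$. This rests on the fact that in characteristic zero the tensor product of semistable (respectively, polystable) bundles over a compact K\"ahler manifold is again semistable (respectively, polystable)---the semistable case being Ramanan--Ramanathan and the polystable case following from the existence of a Hermitian--Einstein metric and its preservation under tensor operations. One must also verify that the intrinsic slope of $E'_H(W)$ used to test its semistability is precisely the one governing $\tau'$-stability on the quiver-bundle side, which is guaranteed by Lemma~\ref{lem:slopes}; this slope bookkeeping, rather than any deep new idea, is where the care is required.
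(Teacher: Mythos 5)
Your overall route is the same as the paper's: test everything on indecomposable (equivalently, since $H$ is reductive in characteristic zero, irreducible) $H$-modules, identify $E'_H(W)=G\times^P E_H(W)$ with the quiver bundle $\FF(W)$, and then chain through the vector-bundle dimensional reduction \cite[Theorem~4.21]{AG2} together with the equivariant-versus-ordinary comparison (Lemmas~\ref{le-sst} and~\ref{le-pst}) and the slope matching of Lemma~\ref{lem:slopes}. The difference is that you compress the bridge between the principal bundle and its associated vector bundles into one citable ``Tannakian characterization'': $E'_H$ is semistable (resp.\ polystable) if and only if $E'_H(W)$ is so for every irreducible $W$. That equivalence is true, but it is not a theorem of \cite{RR} or \cite{AB} in this form, and the direction of it that your argument actually needs is precisely the one you leave without proof.

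The documented direction is the one you call ``the main obstacle'': $E'_H$ (semi/poly)stable implies each irreducible associated bundle is (semi/poly)stable, by \cite[Theorem~3.18]{RR} and \cite[Theorem~3.9]{AB}, using that $Z_0(H)$ acts through a character on an irreducible module. The converse --- needed to pass from (semi/poly)stability of $\FF$ to that of $E'_H$ --- is exactly where the paper does its work: it decomposes $\LieH=\bigoplus_{i}W_i$ into indecomposable $H$-modules, observes that reductivity of $H$ makes each $W_i$ self-dual, hence $\deg E'_H(W_i)=0$; only because all these slopes agree (they vanish) can one conclude that $\ad(E'_H)=\bigoplus_i E'_H(W_i)$ is semistable (resp.\ polystable) as a direct sum of semistable (resp.\ polystable) bundles of equal slope, after which \cite[Proposition~2.10]{AB} (resp.\ \cite[Corollary~3.8]{AB}) upgrades this to (semi/poly)stability of the principal bundle $E'_H$ itself. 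A direct sum of semistable bundles of different slopes is never semistable, so the self-duality/degree-zero step is indispensable, and it is absent from your proposal. A second, smaller gap: citing \cite{RR} alone for the semistable direction is insufficient, because \cite{RR} requires a rational polarization, while the class of $p_1^*\omega+p_2^*\omega_\bepsilon$ involves the arbitrary real parameters $\epsilon_\alpha$ and need not be rational; the paper covers the general K\"ahler class with \cite[Lemma~5]{AAB}.
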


\begin{proof}
Let $E_H$ be the $P$-equivariant principal $H$-bundle over $X$
corresponding to $\FF$ and $E'_H$ in Theorem~\ref{thm1} and
Corollary~\ref{cor1}. Let $W$ be an $H$-module. Then the functorial
correspondence
\[
E'_H\cong G\times^PE_H
\]
of equivariant principal bundles of
Lemma~\ref{lem:induction-reduction} (see~\eqref{eq:induced-pfb})
induces another one
\begin{equation}\label{eq:identifications-induced-vb}
E'_W\cong G\times^PE_W,
\end{equation}
where $E_W$ and $E'_W$ are the $P$-equivariant holomorphic vector
bundle on $X$ associated to the principal bundle $E_H$, and the
$G$-equivariant holomorphic vector bundle on $X\times G/P$ associated
to the principal bundle $E'_H$, respectively, for the $H$-module $W$,
that is
\begin{equation}\label{de-iii}
E_W\defeq E_H\times^HW, \quad E'_W\defeq E'_H\times^HW.
\end{equation}

We will apply the identification~\eqref{de-iii} to the indecomposable
$H$-submodules of the Lie algebra $\LieH$ of $H$. More precisely, the
adjoint representation of $H$ has a decomposition
\begin{equation}\label{eq:adj-rep}
\LieH=\bigoplus_{i=1}^\ell W_i
\end{equation}
into a direct sum of indecomposable $H$-modules. Since $H$ is
reductive, we know that each $W_i$ is self-dual, meaning the
$H$-module $W^*_i$ is isomorphic to $W_i$ for all $1\leq i\leq
\ell$. Then
\begin{equation}\label{de-ii}
\ad(E'_H)=\bigoplus_{i=1}^\ell E'_i, 
\end{equation}
where $\ad(E'_H)\defeq E'_H(\LieH)$ and $E'_i\defeq E'_H(W_i)$ are the
vector bundles over $X\times G/P$ associated to $E'_H$ for the
$H$-modules $\LieH$ and $W_i$, respectively. Note that the vector
bundle $(E'_i)^*$ is isomorphic to $E'_i$, because the $H$-module
$W_i$ is self-dual, so in particular,
\begin{equation}\label{de-i}
\deg(E'_i)=0.
\end{equation}

Assume now that $\FF$ is polystable. To prove that the principal
$H$-bundle $E'_H$ is polystable, note that if $W$ is an indecomposable
$H$-module, then $E'_W$ is polystable. This follows because the
holomorphic $(Q,\cK)$-bundle $\FF(W)$ is polystable in this case (by
Definition~\ref{def1}), so $E'_W\cong G\times^PE_W$ is equivariantly
polystable by \cite[p. 40, Theorem~4.21]{AG2}, and hence $E'_W$ is
polystable by Lemma~\ref{le-pst}. In particular, each $E'_i$ is
polystable, because each $W_i$ is indecomposable. Hence the adjoint
vector bundle $\ad(E'_H)$ is polystable (by~\eqref{de-ii}
and~\eqref{de-i}), and therefore so is the principal $H$-bundle
$E'_H$, by \cite[p. 224, Corollary 3.8]{AB}.

To prove the converse, assume $E'_H$ is polystable. Let $W$ be an
indecomposable $H$-module. As $W$ is indecomposable, $Z_0(H)$ acts on
$W$ as multiplication through a character of $Z_0(H)$, so the
associated vector bundle $E'_W$ is polystable (by~\cite[p. 224,
Theorem 3.9]{AB}), and hence the holomorphic $(Q,\cK)$-bundle $\FF(W)$
is polystable too, by~\cite[p. 40, Theorem 4.21]{AG2} and Lemma
\ref{le-pst}. Thus the $P$-equivariant principal $H$-bundle $E_H$ is
polystable.

The proof for semistability is very similar.

Assume $\FF$ is semistable. To prove that the principal $H$-bundle
$E'_H$ is semistable, we first observe that if $W$ is an
indecomposable $H$-module, then $E'_W$ is semistable. This follows
because the associated $P$-equivariant vector bundle $E_W$ corresponds
to a semistable $(Q,\cK)$-module $\FF(W)$ in this case, so it follows
as above, from \cite[p. 40, Theorem 4.21]{AG2} and Lemma \ref{le-sst},
that the associated vector bundle $E'_H(W)$ is semistable. In
particular, each vector bundle $E'_i$ in \eqref{de-i} is semistable,
because $W_i$ is indecomposable. Hence we conclude from~\eqref{de-ii}
and~\eqref{de-i} that the adjoint vector bundle $\ad(E'_H)$ is
semistable, and therefore so is the principal $H$-bundle $E'_H$ is
semistable, by \cite[p. 214, Proposition 2.10]{AB}.

To prove the converse, assume $E'_H$ is semistable. To prove that
$\FF$ is semistable, in view of \cite[p. 40, Theorem 4.21]{AG2} and
Lemma \ref{le-sst}, it suffices to show that for every indecomposable
$H$-module $W$, the associated vector bundle $E'_H(W)$ is
semistable. If the K\"ahler class on $X\times G/P$ is rational, then
$E'_H(W)$ is semistable by \cite[p. 285, Theorem 3.18]{RR}. For the
general K\"ahler class, the semistability of $E'_H(W)$ follows from
\cite[p. 700, Lemma 5]{AAB}. This completes the proof of the
proposition.
\end{proof}

\subsection{Dimensional reduction and the vortex equations}
\label{sub:preservation-HYM}

A \emph{hermitian metric} on a holomorphic $(Q,\cK)$-bundle
$(\bE,\bphi)$ over $X$ is a set $\bh=\{h_\lambda\}_{\lambda\in Q_0}$
of ($C^\infty$) hermitian metrics $h_\lambda$ on $E_\lambda$, indexed
by the vertices $\lambda\in Q_0$ (where we set $h_\lambda=0$ for all
$\lambda$ such that $E_\lambda=0$). For each arrow $a\in Q_1$, the
homomorphism $\phi_a\colon E_{ta}\to E_{ha}$ has a ($C^\infty$)
adjoint
\[
\phi_a^*\colon E_{ha}\longrightarrow E_{ta}
\]
with respect to the hermitian metrics $h_{ta}$ and $h_{ha}$ on
$E_{ta}$ and $E_{ha}$, respectively. Let $\Lambda$ be the adjoint of
multiplication of differential forms on $X$ by the K\"ahler form
$\omega$. Let $F_{h_\lambda}$ be the curvature of the Chern connection
of the hermitian metric $h_\lambda$ on $E_\lambda$, for all
$\lambda\in Q_0$ such that $E_\lambda\neq 0$.
A hermitian metric $\bh$ on $(\bE,\bphi)$ satisfies the \emph{quiver
  vortex equations} if
\[
n_\lambda\sqrt{-1}\,\Lambda F_{h_\lambda}
+\sum_{a\in h^{-1}(\lambda)} \phi_a\phi_a^*
-\sum_{a\in t^{-1}(\lambda)} \phi_a^*\phi_a
=\tau'_{\lambda}\Id_{E_\lambda},
\]
for all $\lambda\in Q_0$ such that $E_\lambda\neq 0$, where
$\Id_{E_\lambda}\colon E_\lambda\to E_\lambda$ is the identity map
(cf.~\cite[\S 2.1]{AG3}).

Let $\FF\colon\Rep{H}\to\Vect{Q,\cK}$ be an $H$-torsor in holomorphic
$(Q,\cK)$-bundles over $X$, and $E_H$ the corresponding
$P$-equivariant holomorphic principal $H$-bundle on $X$. We define the
\emph{adjoint holomorphic $(Q,\cK)$-bundle} of $\FF$ as the
holomorphic $(Q,\cK)$-bundle $\ad(\FF)$ on $X$ corresponding to the
($P$-equivariant) adjoint holomorphic vector bundle
\[
\ad(E_H)=E_H(\LieH)=E_H\times^H\LieH
\]
of $E_H$ via Theorem~\ref{prop:cat-Q-bundles} ($\LieH$ being the
adjoint representation of $H$). In other words, 
\begin{equation*}
\ad(\FF)\defeq\FF(\LieH)\in\Vect{Q,\cK}.
\end{equation*}

\begin{theorem}\label{thm2}
An $H$-torsor $\FF\colon\Rep{H}\to\Vect{Q,\cK}$ in holomorphic
$(Q,\cK)$-bundles over $X$ is polystable if and only if its
adjoint holomorphic $(Q,\cK)$-bundle $\ad(\FF)$ admits a hermitian
metric $\bh$ that satisfies the quiver vortex equations.
\end{theorem}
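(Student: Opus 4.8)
The plan is to reduce the statement to a Hitchin--Kobayashi correspondence for a single quiver bundle, namely the adjoint $(Q,\cK)$-bundle $\ad(\FF)=\FF(\LieH)$, by showing that $\FF$ is polystable if and only if $\ad(\FF)$ is polystable of vanishing $\tau'$-slope. Granting this, the theorem follows at once from the Hitchin--Kobayashi correspondence for quiver bundles \cite{AG2,AG3}, which guarantees that a torsion-free $(Q,\cK)$-bundle of $\tau'$-slope zero is polystable precisely when it carries a hermitian metric solving the quiver vortex equations for the parameters $\tau'$ fixed above (the slope-zero condition is exactly the integrated trace constraint $\deg_{\tau'}=0$ of those equations). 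The remaining work lies entirely in the equivalence between polystability of the torsor and polystability of its adjoint bundle, for which I would combine the dimensional-reduction correspondence of Theorem~\ref{thm:preservation-ss} with the principle that the polystability of a principal bundle with reductive structure group is detected by its adjoint vector bundle \cite[p.~224, Corollary~3.8]{AB}.

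For the forward direction I would argue as follows. Assuming $\FF$ polystable, I first use that $\FF$, being an exact and hence additive tensor functor, preserves the finite direct sum~\eqref{eq:adj-rep}, so that
\[
\ad(\FF)=\bigoplus_{i=1}^{\ell}\FF(W_i).
\]
Each summand $\FF(W_i)$ is polystable by Definition~\ref{def1}, since $W_i$ is indecomposable; and since $W_i$ is self-dual, Lemma~\ref{lem:slopes} together with~\eqref{de-i} shows that the $\tau'$-degree of $\FF(W_i)$ equals $\deg E'_i=0$, so its $\tau'$-slope vanishes. A direct sum of polystable $(Q,\cK)$-bundles of a common $\tau'$-slope is again polystable, so $\ad(\FF)$ is polystable of $\tau'$-slope zero, and the quiver vortex equations are solvable on it.

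For the converse I would run the same chain of equivalences backwards. A vortex solution on $\ad(\FF)$ forces $\ad(\FF)$ to be polystable of $\tau'$-slope zero; under the equivalence of Theorem~\ref{prop:cat-Q-bundles} this $(Q,\cK)$-bundle corresponds to the $G$-equivariant vector bundle $\ad(E'_H)$ of~\eqref{de-ii}, and \cite[Theorem~4.21]{AG2} identifies its polystability with the equivariant polystability of $\ad(E'_H)$, which is ordinary polystability by Lemma~\ref{le-pst}. Corollary~3.8 of \cite{AB} then upgrades the polystability of the adjoint bundle to polystability of the principal bundle $E'_H$, and Theorem~\ref{thm:preservation-ss} finally transports this back to polystability of the torsor $\FF$.

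The hard part, and the reason the argument cannot be purely formal, is precisely this converse passage: a \emph{single} vortex solution on $\ad(\FF)$ must yield the polystability of $\FF(W)$ for \emph{every} indecomposable $H$-module $W$, not merely for the summands of $\LieH$. The resolution rests on the reductivity of $H$, which makes the adjoint bundle faithful enough to control the whole torsor through \cite[Corollary~3.8]{AB}, together with the two dimensional-reduction equivalences (Theorems~\ref{prop:cat-Q-bundles} and~\ref{thm:preservation-ss}) that relay the information between $X\times G/P$ and $X$. The genuinely analytic content---existence and uniqueness of solutions to the quiver vortex equations---is imported wholesale from \cite{AG2,AG3}, so no new gauge-theoretic estimates need be carried out here.
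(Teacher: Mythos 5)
Your proposal is correct and takes essentially the same route as the paper's own proof: the same decomposition $\ad(\FF)=\bigoplus_i\FF(W_i)$ coming from $\LieH=\bigoplus_i W_i$, the same slope-zero computation via Lemma~\ref{lem:slopes} and~\eqref{de-i}, the Hitchin--Kobayashi correspondence for quiver bundles \cite[Theorem~4.24]{AG2} in the forward direction, and in the converse the same chain through \cite[Theorem~4.21]{AG2}, \cite[Corollary~3.8]{AB} and Theorem~\ref{thm:preservation-ss}. The only differences are cosmetic: you spell out the additivity of $\FF$ and the use of Lemma~\ref{le-pst}, which the paper leaves implicit.
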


\begin{proof}
Let $E_H$ and $E'_H=G\times^PE_H$ be the $P$-equivariant principal
$H$-bundle on $X$ and the $G$-equivariant principal $H$-bundle on
$X\times G/P$ corresponding to $\FF$, respectively
(see~\eqref{eq:induced-pfb}). As in the proof of
Theorem~\ref{thm:preservation-ss}, using the adjoint
$H$-representation $\LieH$ and the indecomposable $H$-modules $W_i$
appearing in its decomposition~\eqref{eq:adj-rep}, the principal
bundles $E_H$ and $E'_H$ induce $P$-equivariant vector bundles over
$X$, and $G$-equivariant vector bundles over $X\times G/P$,
respectively given by
\begin{gather*}
\ad(E_H)=E_H\times^H\LieH=\bigoplus_{i=1}^\ell E_i, 
\quad
\ad(E'_H)=E'_H\times^H\LieH=\bigoplus_{i=1}^\ell E'_i,
\\
E_i=E_H\times^HW_i, 
\quad
E'_i=E'_H\times^HW_i,
\end{gather*}
that are related as in~\eqref{eq:identifications-induced-vb}, that is,
\[
\ad(E'_H)\cong G\times^P\ad(E_H),\quad E'_i\cong G\times^PE_i.
\]
Furthermore, the decomposition~\eqref{eq:adj-rep} induces another one
of the adjoint $(Q,\cK)$-bundle
\begin{equation}\label{eq:thm2.1}
\ad(\FF)=\FF(\LieH)=\bigoplus_{i=1}^\ell\,(\bE_i,\bphi_i),
\end{equation}
with $(\bE_i,\bphi_i)\defeq\FF(W_i)$ for all $1\leq i\leq\ell$, where
Lemma~\ref{lem:slopes} and~\eqref{de-i} imply
\begin{equation}\label{eq:thm2.2}
\mu_{\tau'}(\bE_i,\bphi_i)=\mu(E'_i)=0.
\end{equation}

Assume now that $\FF$ is polystable. Then $(\bE_i,\bphi_i)=\FF(W_i)$
is polystable, because $W_i$ is indecomposable, so $\ad(\FF)$ is
polystable, by~\eqref{eq:thm2.1} and~\eqref{eq:thm2.2}. Hence
$\ad(\FF)$ admits a hermitian metric $\bh$ that satisfies the quiver
vortex equations, by \cite[p. 41, Theorem 4.24]{AG2}.

To prove the converse, assume that $\ad(\FF)=\FF(\LieH)$ admits a
hermitian metric $\bh$ that satisfies the quiver vortex equations.
Then the $Q$-bundle $\ad(\FF)$ is polystable, by \cite[p. 41, Theorem
4.24]{AG2}, and hence the vector bundle $\ad(E'_H)$ is polystable,
by~\cite[p. 40, Theorem 4.21]{AG2}, because $\ad(\FF)$ corresponds to
the adjoint bundle $\ad(E'_H)$ of $E'_H$ via
Theorem~\ref{prop:cat-Q-bundles}. Hence $E'_H$ is polystable, by
\cite[p. 224, Corollary 3.8]{AB}, and therefore we conclude from
Theorem~\ref{thm:preservation-ss} that $E_H$ is polystable.
\end{proof}


\end{document}